\newtheorem{thm}{Theorem}
\newtheorem{prop}[thm]{Proposition}
\newtheorem{lem}[thm]{Lemma}
\newtheorem{cor}[thm]{Corollary}
\theoremstyle{definition}
\theoremstyle{remark}
\numberwithin{equation}{section}
\renewcommand{\tau}{\uptau}
\renewcommand{\phi}{\varphi}
\renewcommand{\epsilon}{\varepsilon}
\author{
Fabr\'{i}cio S. Benevides
\thanks{Departamento de Matem\'{a}tica, Universidade Federal do Cear\'{a}, Fortaleza, CE 60455-760, Brazil {\tt fabricio@mat.ufc.br}. Research supported by CNPq.}
\and
D\'{a}niel Gerbner
\thanks{Hungarian Academy of Sciences, Alfr\'{e}d R\'{e}nyi Institute of Mathematics, P.O.B. 127, Budapest H-1364, Hungary {\tt gerbner.daniel@renyi.mta.hu}. Research supported      by Hungarian National Science Fund (OTKA), grant PD 109537.}
\and
Cory T. Palmer
\thanks{Department of Mathematical Sciences, University of Montana, Missoula, Montana 59812, USA {\tt cory.palmer@umontana.edu}. Research supported by Hungarian National Science Fund (OTKA), grant NK 78439.}
\and
Dominik K. Vu
\thanks{Department of Mathematical Sciences, University of Memphis, Memphis, Tennessee 38152, USA {\tt dominik.vu@memphis.edu}. Research supported in part by the National Science Foundation, grants  DMS-0906634 and CNS-0721983, and by the Heilbronn Fund.}
}
\title{Generalising separating families of fixed size}
\begin{document}

\maketitle

\begin{abstract}
We examine the following version of a classic combinatorial search problem introduced by R\'{e}nyi:
Given a finite set $X$ of $n$ elements we want to identify an unknown subset $Y \subset X$ of exactly $d$ elements by testing, by as few as possible subsets $A$ of $X$, whether $A$ contains an element of $Y$ or not. We are primarily concerned with the model where the family of test sets is specified in advance (non-adaptive) and each test set is of size at most a given $k$. Our main results are asymptotically sharp bounds on the minimum number of tests necessary for fixed $d$ and $k$ and for $n$ tending to infinity.
\end{abstract}

\section{Introduction} 

We consider a central question of combinatorial search theory in which, given a set $X$, we wish to identify a particular subset $Y$ of unknown elements of $X$. We call the elements of $Y$ \emph{defective}. To this end, we are allowed to construct a family $\mathcal{A}$ of \emph{queries}. Each query in $\mathcal{A}$ corresponds to a subset $A \subset X$ and we receive a positive result if and only if $A$ contains at least one of the elements of $Y$. Here we are concerned with the so called non-adaptive case, in which the queries are chosen in advance, so we cannot modify $\mathcal{A}$ based on the answers to some of the queries. The typical goal is to find the minimum size of a family $\mathcal{A}$ that is required to determine any set $Y$. This question is related to many practical problems, amongst which are \emph{Wasserman-type blood tests}, chemical analysis and the defective coin problem~\cite{Dorfmann1943, Sterrett1957}. A comprehensive overview of the main types of combinatorial search problems can be found in a survey by Katona~\cite{Katona1973} or the monograph of Du and Hwang~\cite{DuHwang2000}. 

In order to identify a fixed defective set of size at most (or exactly) $d$ among $n$ elements it is well known (see e.g. \cite{DuHwang2000}) that the number of queries required, denoted $q(n,d)$, satisfies
\[\Omega\left(\frac{d^2}{\log d} \log n \right) \leq q(n,d) \leq O(d^2 \log n).\]

In this paper, we restrict our attention to the case where the query sets may only be of size at most $k$. For this model, the particular case where $Y$ contains a single element was posed as a problem by R\' enyi \cite{Renyi1961a} and solved by Katona \cite{Katona1966} for $k < n/2$. Katona determined the exact form of a matrix representing an optimal search and used this to find upper and lower estimates for the minimum number of queries. While the lower bound provided is best known, the upper bound was subsequently improved by Wegener~\cite{Wegener1979} and Luzgin~\cite{Luzgin1980}. In 2008 Ahlswede~\cite{Ahlswede2008} proved that the lower bound is asymptotically tight.

Let $X$ be a set of size $n$ and $Y\subset X$ be a set of defective elements of size at most $d$. Let $q(n,\overline{d},k)$ denote the least number of queries of size at most  $k$ necessary to identify $Y$.
In 2013, Hosszu, Tapolcai and Wiener~\cite{HosszuTapolcaiWiener2013}  strengthened Katona's result while providing a proof entirely relying on linear algebraic methods.

\begin{thm}[Hosszu, Tapolcai and Wiener]\label{thm:HTW}
For $k <n/2$, $q(n,\overline{1},k)$ is the least number $q$ for which there exist positive integers $j \leq q-1$ and $a < \binom{q}{j+1}$ such that
\begin{align*}
\sum_{i=0}^{j} i \cdot \binom{q}{i} + a (j+1)  &\leq kq,\\
\sum_{i=0}^{j} \binom{q}{i} + a &= n.
\end{align*}
\end{thm}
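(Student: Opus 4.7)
The plan is to recast the search problem as a $0/1$ matrix problem. Represent any non-adaptive strategy with $q$ queries by a matrix $M \in \{0,1\}^{q \times n}$ whose $i$-th row is the characteristic vector of the $i$-th query and whose $x$-th column is the vector $v_x \in \{0,1\}^q$ recording which queries contain element $x$. The strategy identifies a single defective element iff the columns of $M$ are pairwise distinct, because two elements with identical columns produce identical answer vectors. The size condition is that every row of $M$ has weight at most $k$. Thus the problem reduces to finding the least $q$ for which $n$ distinct vectors in $\{0,1\}^q$ can be arranged as columns of a matrix all of whose rows have weight $\leq k$.

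\textbf{Lower bound.} Given a feasible $M$, counting ones two ways gives total weight $\leq kq$. Write $n$ in its unique decomposition
\[ n = \sum_{i=0}^{j}\binom{q}{i} + a, \qquad 0 \le a < \binom{q}{j+1}. \]
The total weight of any $n$ distinct vectors in $\{0,1\}^q$ is minimised greedily by choosing the lightest ones (all vectors of weight at most $j$, plus any $a$ of weight $j+1$), yielding
\[ \sum_{i=0}^{j} i\binom{q}{i} + a(j+1) \leq kq. \]
This is the first condition of the theorem, so such $j,a$ must exist for the $q$ realising the optimum, giving $q(n,\overline{1},k) \geq q^*$ where $q^*$ is the smallest valid $q$.

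\textbf{Upper bound.} Conversely, given $q,j,a$ satisfying the two conditions, I would construct $M$ explicitly. Take as columns all $\sum_{i=0}^j \binom{q}{i}$ vectors of weight $\leq j$; by symmetry each row receives weight $w := \sum_{i=0}^{j-1}\binom{q-1}{i}$, because the identity $\sum_{i=0}^{j} i\binom{q}{i} = qw$ distributes this total uniformly over the $q$ rows. Then adjoin $a$ distinct weight-$(j+1)$ vectors chosen so that the resulting additional row loads are as balanced as possible. The first hypothesis rearranges to $a(j+1) \leq q(k-w)$, so the average supplementary load per row is at most $k-w$; if the balancing can be made so that every row receives at most $k-w$ additional ones, the total row weights never exceed $k$, and the $n$ columns remain distinct by construction.

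\textbf{Main obstacle.} The heart of the argument is the balancing lemma: for every $a < \binom{q}{j+1}$ there exist $a$ distinct $(j+1)$-subsets of $[q]$ whose incidence sequence covers each coordinate at most $\lceil a(j+1)/q \rceil$ times, a quantity that is $\leq k-w$ since $k-w$ is an integer. I would prove this via a Baranyai-type statement, producing an ordering $S_1,\dots,S_{\binom{q}{j+1}}$ of the $(j+1)$-subsets of $[q]$ under which every initial segment has coordinate-degree sequence as balanced as possible (each degree equal to $\lfloor a(j+1)/q \rfloor$ or $\lceil a(j+1)/q \rceil$). Such an ordering can be built by iteratively extracting near-perfect factors from the biregular bipartite incidence graph between $(j+1)$-subsets and coordinates of $[q]$, or alternatively by listing orbits of the cyclic group $\mathbb{Z}_q$ acting rotationally on $[q]$. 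With this lemma the construction closes, matching the lower bound and establishing $q(n,\overline{1},k)=q^*$.
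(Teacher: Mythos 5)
First, a point of comparison: the paper does not prove Theorem~\ref{thm:HTW} at all; it is quoted from Hosszu, Tapolcai and Wiener, whose argument the paper describes as ``entirely relying on linear algebraic methods,'' and it sharpens Katona's 1966 solution of R\'enyi's problem. Your proposal instead follows Katona's original combinatorial route, so there is no in-paper proof to measure it against and I judge it on its own. The skeleton is sound: the reduction to a $0/1$ matrix with pairwise distinct columns and row weights at most $k$, the lower bound by double counting ones together with the observation that $n$ distinct vectors in $\{0,1\}^q$ of minimum total weight are all vectors of weight at most $j$ plus $a$ vectors of weight $j+1$, and the reduction of the upper bound to the balancing lemma (using that $k-w$ is an integer to pass from $a(j+1)/q\leq k-w$ to $\lceil a(j+1)/q\rceil\leq k-w$) are all correct.

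The genuine gap is that the balancing lemma, which you rightly call the heart of the argument, is asserted rather than proved, and one of your two suggested routes fails as described. A prefix of a rotation orbit of $\mathbb{Z}_q$ need not be almost regular: for $q=6$ and $j+1=3$, the first two sets of the orbit of $\{1,2,3\}$ are $\{1,2,3\}$ and $\{2,3,4\}$, which cover coordinates $2$ and $3$ twice although $\lceil a(j+1)/q\rceil=1$; a balanced pair exists inside that orbit, but ``listing orbits'' does not select it, so this route needs a genuinely new idea for partial orbits. The bipartite-factor route is essentially a restatement of the claim, since a near-perfect factor of the incidence graph between $(j+1)$-subsets and coordinates is exactly an almost-regular subfamily. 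The lemma is true, and the standard proof is the integral-flow argument behind Baranyai's theorem (exhibit a symmetric fractional solution and invoke integrality of the flow polytope, with care to keep the $a$ chosen sets distinct); this must actually be carried out before the construction closes. Two smaller issues to address: the theorem insists $j$ and $a$ are positive while your unique decomposition can yield $a=0$, so the boundary cases where $n$ is an exact partial sum of binomial coefficients need a word; and you should justify the presence of the all-zero column, i.e., make explicit that the model seeks exactly one defective element, since if the empty defective set were admissible every column would have to be nonzero and the sum in the statement would have to start at $i=1$.
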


When $n$ is large enough this gives the following corollary.

\begin{cor}[Hosszu, Tapolcai and Wiener]
If $n \geq \binom{k}{2} +1$, then \[q(n,\overline{1},k) = \left\lceil  \frac{2n-2}{k-1} \right\rceil.\]
\end{cor}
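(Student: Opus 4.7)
The plan is to apply Theorem~\ref{thm:HTW} with the specific choice $j = 1$, which for $n \geq \binom{k}{2}+1$ should be the choice that minimises the admissible value of $q$. With $j = 1$ the count equation becomes $1 + q + a = n$, hence $a = n - q - 1$; substituting into the weight inequality $\sum_{i=0}^{1} i\binom{q}{i} + 2a \leq kq$ and rearranging should yield precisely the bound $q \geq (2n-2)/(k-1)$, so that $q = \lceil (2n-2)/(k-1)\rceil$ is the smallest integer for which $(j,a) = (1, n-q-1)$ is admissible. To finish the upper bound I still need to check that $1 \leq a < \binom{q}{2}$: these reduce to $n \geq q + 2$ and $n \leq \binom{q+1}{2}$, both of which follow from $n \geq \binom{k}{2}+1$ together with the lower bound $q \geq k$ that the formula forces.

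The matching lower bound requires ruling out smaller $q$. For $j' = 1$ this is immediate, since the weight inequality fails. For $j' \geq 2$, the count equation forces all of the $\binom{q'}{2}$ weight-$2$ columns to be used, which injects $q'(q'-1)$ into the weight sum and, compared with $kq'$, already forces $q'$ to be at least as large as the $j = 1$ value whenever $n$ lies in the stated range. I would formalise this by writing the minimum admissible $q$ as an explicit function of $j$ and $n$ and verifying that $j = 1$ is the minimiser.

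The main obstacle will be the optimisation step, namely showing that the $j \geq 2$ branches cannot beat $j = 1$ once $n \geq \binom{k}{2}+1$. The threshold $\binom{k}{2}+1$ is calibrated precisely so that at $n = \binom{k}{2}+1$ the value $q = k$ is attained by the $j = 1$ configuration, and for any larger $n$ the linear growth of the $j = 1$ solution in $n$ dominates the quadratic dependence forced by $j \geq 2$. Rounding issues from the ceiling in the formula should be handled by a direct check at the two endpoints $q$ and $q - 1$.
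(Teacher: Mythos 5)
Your overall strategy --- specialise Theorem~\ref{thm:HTW} to $j=1$, solve the count equation for $a$, and substitute into the weight inequality --- is the natural one, but the central algebraic claim is false. With $j=1$ the weight inequality is $q+2a\le kq$, and substituting $a=n-q-1$ gives $q+2(n-q-1)\le kq$, i.e.\ $2n-2\le (k+1)q$, hence $q\ge \frac{2n-2}{k+1}$, \emph{not} $q\ge\frac{2n-2}{k-1}$. The $-2q$ contributed by $2a$ moves to the right-hand side and raises the coefficient of $q$ from $k-1$ to $k+1$; no rearrangement of $q+2a\le kq$ with $a=n-q-1$ produces a denominator of $k-1$. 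So the proposal as written does not establish the displayed identity, and the downstream checks (``the formula forces $q\ge k$'', the endpoint calibration at $n=\binom{k}{2}+1$) inherit the error.

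Moreover this is not a gap you can patch while keeping the target statement: the corollary as printed appears to carry a typo, and the denominator should be $k+1$ --- exactly what your computation, done correctly, yields. Concretely, for $k=3$ and $n=100$ the printed formula gives $q=99$, yet $50$ queries of size $3$ suffice: index the queries by $\mathbb{Z}_{50}$, assign $50$ elements the signatures $\{i\}$ and the remaining $50$ the signatures $\{i,i+1\}$; each query then has size exactly $3$ and all $100$ signatures are distinct and nonempty, so the family is $\overline{1}$-separating, matching $\lceil\frac{2n-2}{k+1}\rceil=50$. What you should do is prove the corrected statement: your $j=1$ argument with the algebra fixed, plus the verification that $1\le a<\binom{q}{2}$ at the optimum and that no pair $(j,a)$ is admissible for any smaller $q$ (the monotonicity of the minimum total weight of $n$ distinct subsets of $[q]$ in $q$ makes this last step cleaner than the case analysis over $j$ you sketch). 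As submitted, however, the proof rests on an algebraic assertion that is simply not true.
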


When the defective set is of size at most $d$, where $d>1$, 
D'yachkov and Rykov \cite{DyachkovRykov2002} proved a general lower bound and found conditions for when this lower bound is sharp (see also F\" uredi and Ruszink\'o~\cite{FurediRusziko2013}).

\begin{thm}[D'yachkov and Rykov]\label{lower-bound} If $n \geq k \geq d \geq 2$, then
\[ \left \lceil \frac{dn}{k} \right\rceil \leq q(n,\overline{d},k).\]
Furthermore, if $d \geq 3$, $k \geq d+1$ and $n = k^d$, then
\[q(n,\overline{d},k) = \frac{dn}{k} = dk^{d-1}.\]
\end{thm}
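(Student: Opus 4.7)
For the lower bound $\lceil dn/k\rceil\leq q(n,\overline{d},k)$, I would use a refined double count of element--query incidences that treats singleton queries separately. Let $\mathcal{A}$ be a $d$-separating family of $q$ queries of size at most $k$, and for $x\in X$ let $s(x)$ be the number of queries containing $x$. The key claim is that if $\{x\}\notin\mathcal{A}$, then $s(x)\geq d$. Indeed, letting $A_1,\dots,A_m$ denote the queries through $x$ (all of size $\geq 2$), the $d$-separation condition requires that for every $Y\subseteq X\setminus\{x\}$ with $|Y|\leq d-1$ some $A_i$ contains $x$ and is disjoint from $Y$; thus the non-empty cores $A_i\setminus\{x\}$ admit no transversal of size $d-1$, while one element from each core gives a transversal of size $m$, forcing $m\geq d$. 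Writing $X_1=\{x:\{x\}\in\mathcal{A}\}$ and observing that $s(x)\geq 1$ always, we get
\[
dn-(d-1)|X_1|\;\leq\;\sum_{x\in X}s(x)\;=\;\sum_{A\in\mathcal{A}}|A|\;\leq\;|X_1|+k(q-|X_1|)\;=\;kq-(k-1)|X_1|,
\]
so $dn\leq kq+(d-k)|X_1|\leq kq$ since $k\geq d$, and dividing by $k$ and rounding up gives the bound.

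For the matching upper bound in the case $n=k^d$, $k\geq d+1$, $d\geq 3$, I would identify $X$ with $[k]^d$ and use as queries the axis-aligned lines: for each direction $i\in[d]$ and each $v\in[k]^{d-1}$, the line $L_{i,v}=\{\mathbf{x}:x_j=v_j\text{ for all }j\neq i\}$. This yields exactly $dk^{d-1}$ queries of size $k$, with each element lying in $d$ of them, so the double count above is sharp. To conclude, I must verify $d$-separability. Since the outcomes of the $k^{d-1}$ lines of direction $i$ are equivalent to knowing the projection $\pi_i(Y)$ of $Y$ onto the coordinates different from $i$, the task reduces to showing: any $Y\subseteq[k]^d$ with $|Y|\leq d$ is determined by the $d$-tuple of projections $(\pi_i(Y))_{i=1}^d$.

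To prove this uniqueness statement, suppose $Y\neq Y'$ are subsets of $[k]^d$ of size at most $d$ with $\pi_i(Y)=\pi_i(Y')$ for all $i$, and pick $x\in Y\setminus Y'$. Each of the $d$ lines through $x$ must meet $Y'$, so $Y'$ contains, for each $i$, an element $z_i$ agreeing with $x$ in every coordinate but the $i$-th; these $d$ points are pairwise distinct and all different from $x$, forcing $|Y'|=d$ and $Y'=\{z_1,\dots,z_d\}$. The symmetric argument applied to some $x'\in Y'\setminus Y$ gives $|Y|=d$ with analogous stellar structure around $x'$; moreover $x'\in Y'$ must coincide with some $z_{i_0}$, so $x$ and $x'$ agree outside the single coordinate $i_0$. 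A direct comparison of $\pi_j(Y)$ with $\pi_j(Y')$ for any $j\neq i_0$ then produces a contradiction, because the ``off-star'' elements of $Y$ project under $\pi_j$ to tuples that disagree with the base projection in \emph{two} coordinates, while those contributed by $Y'$ disagree in only \emph{one}. The main obstacle is carrying out this last comparison cleanly; $d\geq 3$ ensures the stellar structure has enough branches for the pigeon-hole bookkeeping, and $k\geq d+1$ gives the coordinate room needed for the relevant inequalities between disagreeing entries to be strict.
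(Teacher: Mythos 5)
This statement is quoted from D'yachkov and Rykov \cite{DyachkovRykov2002}; the paper gives no proof of it, so there is nothing internal to compare your argument against. Judged on its own, your proposal is correct and essentially complete. The lower bound is the standard cover-free double count: your key claim (every $x$ with $\{x\}\notin\mathcal{A}$ lies in at least $d$ queries) follows exactly as you say by separating $Y\cup\{x\}$ from $Y$ for $|Y|\le d-1$ and building a transversal of the cores, and the bookkeeping $dn\le kq+(d-k)|X_1|\le kq$ is right since $k\ge d$. For the construction, the reduction to injectivity of $Y\mapsto(\pi_1(Y),\dots,\pi_d(Y))$ on sets of size at most $d$ is valid, and the uniqueness argument goes through: the two stars around $x$ and $x'$ force $x=z'_{i_0}$, $x'=z_{i_0}$, and for any $j\ne i_0$ and $i\notin\{j,i_0\}$ (which exists precisely because $d\ge 3$) the point $\pi_j(z'_i)\in\pi_j(Y)$ differs from $\pi_j(x)$ in exactly the two positions $i$ and $i_0$, while every point of $\pi_j(Y')$ differs from $\pi_j(x)$ in at most one position, so $\pi_j(Y)\ne\pi_j(Y')$ --- the ``main obstacle'' you flag is in fact clean. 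Two small remarks: (i) the hypothesis $k\ge d+1$ plays no role in your separability argument (only $d\ge 3$ and $k\ge 2$ are used, plus $k\ge d$ for the lower bound), so your closing sentence attributing the ``coordinate room'' to $k\ge d+1$ is a red herring, though harmless since the hypothesis is given; (ii) in the star step you should note explicitly that neither $Y\setminus Y'$ nor $Y'\setminus Y$ can be empty (a proper containment already forces $|Y|=|Y'|=d$ by the same star count), so both base points $x$ and $x'$ exist. It is worth observing that your $d=3$ analysis also explains why the theorem excludes $d=2$: the grid construction genuinely fails there, e.g.\ $\{(1,1),(2,2)\}$ and $\{(1,2),(2,1)\}$ have identical projections.
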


In light of the above results, we focus on the case when the defective set $Y$ has size exactly $d$. This allows for a smaller number of queries to determine $Y$. Define $q(n,d,k)$ as the minimum number of queries needed to find a fixed defective set $Y$ of size exactly $d$ among $n$ elements. Our main theorem gives bounds on $q(n,d,k)$ that are asymptotically sharp when $d$ is even.

\begin{thm}\label{thm:UBNAdaptive} Fix an integer $d\geq 2$.
If $n\geq k\geq \lfloor d/2 \rfloor + 1$, then
\[\left \lceil \frac{(\lfloor d/2 \rfloor +1)n}{k} \right \rceil -1 \leq q(n, d, k).\]
Furthermore, if $k\geq 2$ and $n$ is sufficiently large, then
\[q(n,d,k) \leq  \left\lceil \frac{(\lceil d/2\rceil +1)(n-1)}{k}\right\rceil + (\lceil d/2\rceil+1).\]
\end{thm}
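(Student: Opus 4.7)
The plan is to encode the problem via incidences: for each $x\in X$ let $B_x=\{A\in\mathcal{A}:x\in A\}$, so that $\mathcal{A}$ identifies every $d$-subset iff the map $Y\mapsto\bigcup_{x\in Y}B_x$ is injective on $\binom{X}{d}$. A structural fact I would use throughout is that for any $(d-1)$-set $S\subseteq X$, at most one element $z\in X\setminus S$ can satisfy $B_z\subseteq\bigcup_{y\in S}B_y$, since otherwise the two $d$-sets $S\cup\{z\}$ and $S\cup\{z'\}$ would yield identical answers. Both halves of the theorem will be driven by the identity $\sum_x|B_x|=\sum_{A\in\mathcal{A}}|A|\leq kq$.

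For the lower bound, set $c=\lfloor d/2\rfloor+1$. It suffices to show $\sum_x|B_x|\geq cn-k$, for then $q\geq cn/k-1\geq\lceil cn/k\rceil-1$. Equivalently, the total degree deficit $\sum_{x:\,|B_x|<c}(c-|B_x|)$ is to be shown to be at most $k$. The plan is to apply the structural fact to pairs of low-degree elements $x,z$ (those with $|B_x|,|B_z|\leq c-1$): one has $|B_x\cup B_z|\leq 2c-2$, which is at most $d-1$ when $d$ is odd and at most $d$ when $d$ is even. In either case one tries to build a $(d-1)$-subset $S\subseteq X\setminus\{x,z\}$ whose sets $B_y$ jointly cover $B_x\cup B_z$, which forces the forbidden collision; for even $d$ this requires an additional argument when $B_x\cap B_z=\emptyset$, based on the pigeonhole principle forcing some third element to lie in two different queries of $B_x\cup B_z$. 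Careful treatment of the exceptional configurations (small queries simultaneously containing two low-degree elements obstruct the covering) bounds the total deficit by $k$.

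For the upper bound, set $c=\lceil d/2\rceil+1$, and construct an explicit family of $\lceil c(n-1)/k\rceil+c$ queries. The plan is to single out one ``base'' element $x_0$ placed in $c$ auxiliary queries (accounting for the additive $+c$), and distribute the remaining $n-1$ elements into a $c$-regular incidence pattern over $\lceil c(n-1)/k\rceil$ main queries, each of size at most $k$. A natural template uses $c$ partitions of the non-base elements into blocks of size $\leq k$, with the partitions arranged via a Latin-square-style rotation so that no two distinct $d$-subsets share their block footprint in all $c$ layers; the $c$ auxiliary queries are chosen to isolate $x_0$ from every other element. The main obstacle here is designing the rotation pattern so that $d$-separation actually holds: the naive cyclic-arc covering fails already at $d\geq3$ (two distinct $d$-sets can have identical arc unions), so verifying the $d$-separation requires an explicit combinatorial case analysis on how two distinct $d$-subsets could collide. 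Overall, the deepest difficulty is the lower-bound deficit argument, particularly in the even-$d$ boundary case, where the $-k$ slack in the bound is precisely what accommodates the exceptional small-query configurations.
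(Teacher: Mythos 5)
There are genuine gaps in both halves. For the lower bound, your reduction ``it suffices to show $\sum_x|B_x|\geq cn-k$'' (equivalently, total degree deficit at most $k$) rests on a false intermediate claim: the trivial $d$-separating family consisting of the $n-1$ singleton queries has $|B_x|\leq 1$ for \emph{every} element, so the deficit is about $(c-1)n$, not $\leq k$. The theorem still holds there only because $q=n-1$ is large, i.e.\ because $\sum_{A}|A|$ is far below $kq$ when many elements have small degree --- information your framework discards the moment you replace $\sum_A|A|$ by $kq$. Your pair-collision mechanism cannot rescue this: when the sets $B_x$ of low-degree elements are pairwise disjoint (as in the singleton family), no $(d-1)$-set $S$ covers $B_x\cup B_z$, no collision is forced, and the number of low-degree elements is unbounded. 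The paper avoids all of this by treating the lower bound as a reduction: a $d$-separating family is $2\lfloor d/2\rfloor$-separating, the Chen--Hwang theorem upgrades it (adding at most one ground element) to a $\overline{\lfloor d/2\rfloor+1}$-separating family, and the D'yachkov--Rykov bound $\lceil \ell n/k\rceil\leq q(n,\overline{\ell},k)$ then gives the claim; a from-scratch counting proof of that black box is a substantially harder task than your sketch suggests.

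For the upper bound, you have correctly identified the target structure (a $c$-uniform dual hypergraph with $\approx cn/k$ vertices, $c=\lceil d/2\rceil+1$, that is $d$-union-free), but you concede that the naive rotation/Latin-square pattern fails for $d\geq 3$ and that the separation verification is missing --- and that verification is exactly the content of the proof. The paper supplies the missing ingredient as a clean sufficient condition: if the $\ell$-uniform dual hypergraph is \emph{linear} and has \emph{girth at least $5$} (no Berge triangle or $C_4$), then it is $(2\ell-2)$-union-free (Lemma~\ref{girth-sep}), and $2\lceil d/2\rceil\geq d$; the existence of such a $k$-regular hypergraph on the required number of vertices is then quoted from Ellis--Linial. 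Standard algebraic rotation schemes (parallel classes of lines, cyclic shifts) are linear but typically contain Berge triangles and $C_4$'s, so you cannot expect a simple explicit pattern to work; without either the girth-$5$ lemma or a concrete verified construction, the upper bound is not established.
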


Note that by querying singleton sets we can identify any defective set of size exactly $d$ using $n-1$ queries (or a defective set of any size using $n$ queries). So we have the trivial upper bound $q(n,d,k) \leq n-1$. In particular, this means that the upper bound above is only of interest when $k> \lceil d/2 \rceil +1$.

When $d=2,3$ these bounds can be improved.

\begin{thm}\label{thm:UBtightd23}
Let $n\geq k$ be positive integers with $n$ sufficiently large, then
\begin{enumerate}
\item[(a)] \[q(n,2,k) = \left\lceil\frac{2(n-1)}{k} \right\rceil\]
\item[(b)] \[ \left\lceil \frac{3n-2}{k+3} \right\rceil \leq q(n,3,k) \leq \left\lceil  \frac{3n}{k} \right\rceil+2.\]
\end{enumerate}
\end{thm}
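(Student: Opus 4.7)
For part (a), I would prove the upper bound $q(n,2,k) \leq \lceil 2(n-1)/k \rceil$ by explicit construction. Set $q = \lceil 2(n-1)/k \rceil$. Designate one ``default'' element whose incidence vector is empty, and identify the remaining $n-1$ elements with the edges of a graph $G$ on vertex set $[q]$ (the queries): the edge $e = \{u, v\}$ belongs to queries $u$ and $v$. Then $|A_i|$ equals the degree of $i$ in $G$, so the query-size constraint becomes max degree $\leq k$. The $2$-separating condition is equivalent to ``no two distinct pairs of edges share the same vertex union''. Since a pair of distinct edges unions to a $3$-set (share a vertex) or $4$-set (disjoint), uniqueness reduces to $G$ containing neither a $C_3$ nor a $C_4$, i.e., girth at least $5$. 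For $n$ sufficiently large relative to $k$, a graph on $q$ vertices with $n-1$ edges, max degree $\leq k$, and girth $\geq 5$ exists, e.g.\ by random $k$-regular constructions or incidence graphs of generalised polygons.

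For the matching lower bound, let $v_x \subseteq [q]$ denote the incidence vector of $x$ and split by $n_0 = |\{x : v_x = \emptyset\}|$. In the case $n_0 = 1$ with default $x_0$, $2$-separation forces distinct singleton supports ($n_1 \leq q$), and---crucially---no ``heavy'' element $z$ with $|v_z| \geq 2$ may share any index with a singleton support, since otherwise $\{x_0, z\}$ and $\{y_i, z\}$ both produce $v_z$. Writing $S$ for the singleton-support set, this gives $|A_i| = 1$ for $i \in S$, and incidence counting on $[q] \setminus S$ yields $2(n - 1 - n_1) \leq k(q - n_1)$, which for $k \geq 2$ simplifies to $q \geq 2(n-1)/k$. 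The case $n_0 = 0$ is handled similarly: singleton multiplicities are shown to be at most $2$ (with equality forcing $n_1 = 2$), heavy elements satisfy $|v_z \cap S| \leq 1$, and a sub-case analysis on the number of singletons recovers the same bound.

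For part (b), I would prove the lower bound via structural constraints of $3$-separation combined with a weighted incidence count. For $n$ sufficiently large one first shows $n_0 \leq 1$, since $n_0 \geq 2$ would create the conflict between $\{a, b, z\}$ and $\{a, c, z\}$ for any non-default $z$. Singletons are then restricted analogously to the $d = 2$ analysis, and $2$-element support vectors are further constrained by triples involving defaults and singletons. The heart of the argument is a weighted count: summing carefully over contributions from each class of elements (defaults, singletons, $2$-element, $\geq 3$-element) and combining with $|A_i| \leq k$, one derives $(k + 3)q \geq 3n - 2$. The $(k+3)$ denominator reflects that each query accounts for its $k$ element-incidences plus a constant number of ``boundary'' identifications tied to elements of low support-size.

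For the upper bound $q(n,3,k) \leq \lceil 3n/k \rceil + 2$, I would refine the construction from Theorem~\ref{thm:UBNAdaptive} for $d = 3$: realise $n-1$ elements as hyperedges of a $3$-uniform hypergraph on $\lceil 3n/k \rceil$ vertices of maximum degree $\leq k$ with all triple-unions of hyperedges distinct (an analogue of girth $\geq 5$ for $3$-uniform hypergraphs), plus a single default element and a small constant overhead, saving one query over the general bound via boundary handling. The main obstacles I anticipate are: (i) the $n_0 = 0$ sub-case of the $d = 2$ lower bound when the number of singletons is comparable to $q$, where bounds require exploiting pair-union distinctness beyond naive incidence counting; and (ii) making precise the $3$-separation constraints on low-support elements to obtain $(k+3)q \geq 3n - 2$ for the $d = 3$ lower bound, which necessitates enumerating several configurations of defaults, singletons, and $2$-element supports.
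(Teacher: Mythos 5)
Your overall strategy coincides with the paper's: upper bounds via a girth-$5$ (hyper)graph whose edges realise the non-default elements plus one empty column, and lower bounds via double counting incidences in the dual hypergraph after structural restrictions on the small hyperedges. Part (a) is essentially the paper's argument: your $n_0=1$ case reproduces the paper's count (the paper packages your ``no heavy element meets a singleton support'' and ``$b$ has degree $1$'' observations as Lemmas~\ref{size3} and~\ref{deg1} and obtains $2n-2-s'\le kq-(k-1)s'$ in one pass, without splitting on $n_0$), and for existence the paper invokes Erd\H{o}s--Sachs, handling the parity obstruction when $q$ is odd by deleting a vertex from a girth-$6$ $k$-regular graph on $q+1$ vertices and adding a matching on its neighbourhood --- a detail your appeal to random regular graphs or generalised polygons leaves open ($k$-regularity needs $kq$ even, generalised polygons exist only for special parameters, and random $k$-regular graphs have girth at least $5$ only with constant probability, which suffices but must be said).

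The genuine gap is in the lower bound of part (b). You name the target inequality $kq\ge 3n-3q-2$, but you do not supply the one structural fact that produces the $3q$ term, and your heuristic for it (``each query accounts for its $k$ incidences plus a constant number of boundary identifications'') is not how the count works. The $3q$ arises as a bound on the total size-deficit of the columns relative to size $3$: at most one empty column (deficit $3$), at most $q$ singleton columns (deficit $2$ each), and --- crucially --- at most $q-1$ columns of size $2$. This last bound is the paper's Lemma~\ref{tree-structure}: the size-$2$ hyperedges form a forest, because a triangle or a $C_4$ among them violates $2$-union-freeness ($e\cup f=e\cup g$, resp.\ $e\cup g=f\cup h$), while a path with four edges $e,f,g,h$ violates $3$-union-freeness ($e\cup f\cup h=e\cup g\cup h$). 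Without this, the number of size-$2$ columns could a priori be as large as $\binom{q}{2}$ and no inequality of the form $(k+3)q\ge 3n-2$ follows; your own list of anticipated obstacles concedes that precisely this step is missing. Once the forest lemma is in place, the rest is the one-line count $qk\ge q+2(q-1)+3(n-2q)=3n-3q-2$, and the upper bound in (b) is, as you say, just the construction of Theorem~\ref{thm:UBNAdaptive} specialised to $\ell=3$.
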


We use the notation $[n]$ to denote the set $\{1, 2, \ldots, n\}$. A family $\mathcal{A}$ of subsets  of $[n]$ is called $d$-\emph{separating} ($\overline{d}$-\emph{separating}) if for any two distinct sets $D_1,D_2\subset [n]$ of size $d$ (at most $d$, respectively) we have a member $A \in \mathcal{A}$ such that either
\[A \cap D_1 \not = \emptyset \textrm{ and } A \cap D_2 = \emptyset, \]
or
\[A \cap D_1 = \emptyset \textrm{ and } A \cap D_2 \not = \emptyset.\]

It is well known that for a fixed defective set $Y$ of size $d$ a family of queries can determine $Y$ if and only if $\mathcal{A}$ is a $d$-separating family. The separating property is monotone in the following sense: a $d$-separating family is $\ell$-separating for any $\ell \leq d$.

For $|\mathcal{A}| = q$, we can form a $q \times n$ matrix $M$  such that the rows of $M$ are the characteristic vectors of the members of $\mathcal{A}$. The columns of $M$ can be thought of as characteristic vectors of a hypergraph $\mathcal{H}$ on the vertex set $[q]$. It is easy to see that $\mathcal{A}$ is $d$-separating if and only if $\mathcal{H}$ is $d$-\emph{union-free}, that is, every collection of exactly $d$ distinct members of $\mathcal{H}$ has a unique union. For a given family of queries $\mathcal{A}$ we will call such a hypergraph $\mathcal{H}$ the \emph{dual hypergraph} of $\mathcal{A}$. Note that in this model it is possible for $\mathcal{H}$ to have the empty set as a hyperedge\footnote{Note that the property that $\mathcal{A}$ is $d$-separating, prevents the dual hypergraph from having multi-hyperedges.}.

Clearly if a family is $\overline{d}$-separating, then it is also $d$-separating. Chen and Hwang~\cite{ChenHwang2007} provide a relationship in the other direction.

\begin{thm}[Chen and Hwang]\label{chen-hwang}
If $\mathcal{A}$ is $2d$-separating family, then there exists a $\overline{d+1}$-separating family $\mathcal{A}'$ obtained by adding at most one new element to the ground set of $\mathcal{A}$.
\end{thm}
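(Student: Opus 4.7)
My plan is to pass to the dual hypergraph formulation introduced in the excerpt: since $\mathcal{A}$ is $2d$-separating, its dual hypergraph $\mathcal{H}$ on vertex set $V=[q]$ is $2d$-union-free. The modification promised in the theorem—adding at most one new element to the ground set—corresponds to a single small augmentation of $\mathcal{H}$, and the goal becomes to show that the augmented hypergraph $\mathcal{H}'$ is $\overline{d+1}$-union-free, equivalently that the corresponding family $\mathcal{A}'$ is $\overline{d+1}$-separating.

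The first step is to classify the \emph{bad pairs} of $\mathcal{H}$, that is, pairs of distinct sub-collections $\mathcal{C}_1\neq\mathcal{C}_2\subseteq\mathcal{H}$ with $|\mathcal{C}_i|\le d+1$ and $\bigcup\mathcal{C}_1=\bigcup\mathcal{C}_2$. A standard padding argument shows that $2d$-union-freeness of $\mathcal{H}$ implies $\ell$-union-freeness for every $\ell\le 2d$, so bad pairs cannot have $|\mathcal{C}_1|=|\mathcal{C}_2|$; assume $|\mathcal{C}_1|<|\mathcal{C}_2|$. Because $\bigcup\mathcal{C}_1=\bigcup(\mathcal{C}_1\cup\mathcal{C}_2)$, one may replace $\mathcal{C}_2$ by $\mathcal{C}_1\cup\mathcal{C}_2$ whenever its size remains at most $d+1$, so WLOG $\mathcal{C}_1\subsetneq\mathcal{C}_2$. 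Moreover, if two distinct hyperedges $F,F'\in\mathcal{C}_2\setminus\mathcal{C}_1$ existed, then $\mathcal{C}_1\cup\{F\}$ and $\mathcal{C}_1\cup\{F'\}$ would be distinct same-size sub-collections of size $|\mathcal{C}_1|+1\le d+1\le 2d$ with identical unions, contradicting $(|\mathcal{C}_1|+1)$-union-freeness. Hence every bad pair has the canonical form $(\mathcal{C},\mathcal{C}\cup\{F\})$ with $F\in\mathcal{H}\setminus\mathcal{C}$ and $F\subseteq\bigcup\mathcal{C}$.

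The heart of the proof is to show that one common ``culprit'' hyperedge $F^{\ast}$ participates in every bad pair, so that a single augmentation suffices to distinguish them all simultaneously. Given two hypothetical bad pairs $(\mathcal{C}_1,\mathcal{C}_1\cup\{F_1\})$ and $(\mathcal{C}_2,\mathcal{C}_2\cup\{F_2\})$ with $F_1\neq F_2$, I would pad both sides using hyperedges of $\mathcal{H}$ outside their supports to form two distinct sub-collections of size exactly $2d$ whose unions coincide, contradicting $2d$-union-freeness. This is precisely the step where the full strength of the $2d$ hypothesis, rather than merely $(d+1)$-separating, is essential. Once the culprit $F^{\ast}$ is pinned down, I would specify the incidence of the new element so that the resulting augmentation introduces an asymmetry between $\mathcal{C}$ and $\mathcal{C}\cup\{F^{\ast}\}$ in $\mathcal{H}'$ for every bad pair, and then verify directly that no new bad pair involving the augmentation arises; the verification again invokes $2d$-union-freeness one last time. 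The main obstacle I anticipate is the extremal regime $|\mathcal{C}_1|=|\mathcal{C}_2|=d$ with disjoint supports, where the padding budget $2d-|\mathcal{C}|-1$ collapses to zero and the uniqueness argument for $F^{\ast}$ must be carried out by a finer case analysis rather than by straightforward padding.
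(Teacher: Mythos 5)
The paper itself offers no proof of this statement---it is quoted from Chen and Hwang~\cite{ChenHwang2007}---so your proposal has to stand on its own. Your setup is sound: passing to the dual hypergraph, interpreting the added element as a new vertex whose incidences you are free to choose, and reducing every violation of $\overline{d+1}$-union-freeness to a canonical bad pair $(\mathcal{C},\mathcal{C}\cup\{F\})$ with $|\mathcal{C}|\le d$ and $F\subseteq\bigcup\mathcal{C}$ is correct (modulo the implicit need for enough hyperedges to pad with; also your ``replace $\mathcal{C}_2$ by $\mathcal{C}_1\cup\mathcal{C}_2$'' step does not cover the case $|\mathcal{C}_1\cup\mathcal{C}_2|>d+1$, though the two-edge argument you give next can be adapted to handle it).

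The heart of your plan, however, rests on a false claim: there need not be a single culprit $F^{\ast}$ common to all bad pairs. Take $d=1$ and the hypergraph with hyperedges $\{1\},\{1,2\},\{3\},\{3,4\}$ on vertex set $\{1,2,3,4\}$ (equivalently, the $2$-separating family $\{\{a,b\},\{b\},\{c,d\},\{d\}\}$ on ground set $\{a,b,c,d\}$). All six pairwise unions are distinct, so it is $2$-union-free, yet it has two bad pairs, $(\{\{1,2\}\},\{\{1\},\{1,2\}\})$ and $(\{\{3,4\}\},\{\{3\},\{3,4\}\})$, with distinct culprits $\{1\}$ and $\{3\}$. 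This example sits precisely in the extremal regime you flag ($|\mathcal{C}_1|=|\mathcal{C}_2|=d$ with disjoint collections), and it shows that no finer case analysis can rescue uniqueness there: the claim is simply false, and the same phenomenon is relevant to this paper, which applies the theorem with the parameter $\lfloor d/2\rfloor$, equal to $1$ when $d\in\{2,3\}$. The theorem is nevertheless true---in the example the new vertex must be placed in both $\{1\}$ and $\{3\}$---so what you actually need is that the set $S$ of \emph{all} culprits works simultaneously, i.e.\ that no culprit of one bad pair lies in the base $\mathcal{C}$ of another; only then does putting the new vertex in every member of $S$ repair every bad pair at once. That is a genuinely different lemma from uniqueness, and your padding argument does not yield it: the natural comparison produces two collections of different sizes with equal unions, which exact $2d$-union-freeness does not forbid. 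As written, the proposal therefore has a real gap at its central step.
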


This theorem is the primary tool for the lower bound of Theorem~\ref{thm:UBNAdaptive}. Chen and Hwang state that their theorem is weak in the sense that it should be possible to construct a $\overline{\ell}$-separating family $\mathcal{A}'$ for $\ell > d+1$. However, it follows from our upper bound in Theorem~\ref{thm:UBNAdaptive} that in general $\ell$ cannot be improved to $d+2$.

\section{General bounds on $q(n,d,k)$}

We begin by proving the lower bound in Theorem~\ref{thm:UBNAdaptive}. Suppose that $\mathcal{A}$ is a $d$-separating family on ground set $[n]$ with query size at most $k$ such that
\[|\mathcal{A}| < \left \lceil \frac{(\lfloor d/2 \rfloor +1)n}{k} \right \rceil -1.\]
Set $\ell = \lceil d/2 \rceil+1$. By Theorem~\ref{chen-hwang} we can add at most one new element to the ground set of $\mathcal{A}$ in order to obtain an $\overline{\ell}$-separating family $\mathcal{A}'$ such that
\[|\mathcal{A}'| < \left \lceil \frac{\ell n}{k} \right \rceil.\]
This contradicts the lower bound given by Theorem~\ref{lower-bound}.

To prove the upper bound in Theorem~\ref{thm:UBNAdaptive} we show an explicit construction of the dual hypergraph. Recall that a hypergraph $\mathcal{H}$ is $\ell$-\emph{uniform} if all hyperedges are of size $\ell$. Furthermore a hypergraph $\mathcal{H}$ is \emph{linear} if every two hyperedges intersect in at most one vertex.

A hypergraph is said to be a \emph{cycle} if it has at least two edges and there exists a cyclic ordering of its edges $\{e_1, \dots, e_\ell\}$ such that there are distinct vertices $v_1, \dots, v_\ell $ such that $v_i = e_i \cap e_{i+1}$ (where $e_{\ell+1} = e_1$). This concept of a cycle in a hypergraph is sometimes called \emph{Berge-cycle}, after C. Berge~\cite{Berge1989}. The \emph{length} of a cycle is the number of edges it contains and the \emph{girth} of a hypergraph is the length of the shortest cycle it contains. We use the term \emph{triangle} and $C_4$ to refer to hypergraph cycles with three and four hyperedges, respectively.

We begin with a lemma relating the uniformity and the property of being union-free for hypergraphs of girth at least $5$.

\begin{lem}\label{girth-sep}
Let $\ell \geq 2$ and $G$ be an $\ell$-uniform linear hypergraph. If $G$ has girth at least $5$, then $G$ is $(2\ell-2)$-union-free.  
\end{lem}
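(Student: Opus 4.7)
My plan is to prove the statement by contradiction. Suppose $\mathcal{E} \neq \mathcal{F}$ are two $(2\ell-2)$-element subsets of $E(G)$ with $\bigcup \mathcal{E} = \bigcup \mathcal{F}$, and derive a triangle, a $C_4$, or a violation of linearity. The starting observation is local: for any $F \in \mathcal{F} \setminus \mathcal{E}$, linearity forces $|E \cap F| \leq 1$ for every $E \in \mathcal{E}$, so at least $\ell$ edges of $\mathcal{E}$ are needed to cover the $\ell$ vertices of $F$; moreover, any two such covering edges that meet $F$ at different vertices must be vertex-disjoint, for otherwise $F$ together with them forms a triangle. The symmetric statement holds with $\mathcal{E}$ and $\mathcal{F}$ swapped. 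The argument then splits into two cases, according to whether some $E \in \mathcal{E} \setminus \mathcal{F}$ and some $F \in \mathcal{F} \setminus \mathcal{E}$ intersect.

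\emph{Case 1:} there exist $E \in \mathcal{E} \setminus \mathcal{F}$ and $F \in \mathcal{F} \setminus \mathcal{E}$ with $E \cap F \neq \emptyset$. Fix such $E, F$ and a vertex $v \in E \cap F$, and build two ``dual stars'': $\ell$ pairwise-disjoint edges $E = E_{v_0}, E_{v_1}, \dots, E_{v_{\ell-1}} \in \mathcal{E}$, indexed by the vertices $v_0 = v, v_1, \dots, v_{\ell-1}$ of $F$, and $\ell$ pairwise-disjoint edges $F = F_0, F_1, \dots, F_{\ell-1} \in \mathcal{F}$, indexed by the vertices $u_0 = v, u_1, \dots, u_{\ell-1}$ of $E$. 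The core sub-claim is $F_i \cap E_{v_j} = \emptyset$ for all $i, j \geq 1$: if some $x$ lay in the intersection, the four pairwise intersections $F \cap E = \{v_0\}$, $E \cap F_i = \{u_i\}$, $F_i \cap E_{v_j} = \{x\}$, $E_{v_j} \cap F = \{v_j\}$ would, after verifying distinctness by linearity and by the pairwise-disjointness within each star, produce a $C_4$ on $F, E_{v_j}, F_i, E$. Granted this, for each $i \geq 1$ the $\ell - 1$ vertices of $F_i$ other than $u_i$ lie outside $F \cup E \cup E_{v_1} \cup \cdots \cup E_{v_{\ell-1}}$, hence inside the union of the $\ell - 2$ remaining edges of $\mathcal{E}$, a vertex-set of size at most $\ell(\ell - 2)$. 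Pairwise-disjointness of the $F_i$'s then forces $(\ell - 1)^2$ distinct vertices to fit into at most $\ell(\ell - 2) = (\ell - 1)^2 - 1$ slots, a contradiction.

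\emph{Case 2:} every $E \in \mathcal{E} \setminus \mathcal{F}$ is vertex-disjoint from every $F \in \mathcal{F} \setminus \mathcal{E}$. Fix any such $E$ and $F$; then the $\geq \ell$ edges of $\mathcal{E}$ covering $F$ and the $\geq \ell$ edges of $\mathcal{F}$ covering $E$ all lie in $\mathcal{E} \cap \mathcal{F}$. Since $|\mathcal{E} \cap \mathcal{F}| \leq 2\ell - 3$, pigeonhole yields at least $\ell + \ell - (2\ell - 3) = 3$ common edges meeting both $E$ and $F$. For any two such edges $\tilde C_1, \tilde C_2$ with $\tilde C_i \cap E = \{u_i\}$ and $\tilde C_i \cap F = \{v_i\}$: if $u_1 \neq u_2$ and $v_1 \neq v_2$, then $E, \tilde C_1, F, \tilde C_2$ form a $C_4$; if exactly one of the equalities $u_1 = u_2$, $v_1 = v_2$ holds, then $\tilde C_1, \tilde C_2$ together with $F$ (respectively, $E$) form a triangle; and if both equalities hold, then $|\tilde C_1 \cap \tilde C_2| \geq 2$, contradicting linearity. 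Every outcome is excluded.

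The main obstacle I anticipate is the $C_4$-exclusion in Case 1: one must carefully verify, using the pairwise-disjointness within each star and the specific fact that $E \cap F = \{v_0\}$, that the four candidate cycle-vertices $v_0, u_i, x, v_j$ are genuinely distinct, so that a proper $C_4$ (rather than a degenerate closed walk) is produced; only then does the subsequent counting inequality $(\ell-1)^2 > \ell(\ell-2)$ close the argument.
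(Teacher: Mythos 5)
Your proof is correct, but it is organized along a different axis than the paper's. The paper splits on the size of the symmetric difference: if two edges $D_1,D_2$ of one family lie outside the other, it notes that $|D_1\cup D_2|\in\{2\ell-1,2\ell\}$ must be covered by the $2\ell-2$ edges of the other family, each meeting each $D_i$ in at most one vertex, so a linear pigeonhole count immediately yields one edge meeting both (a triangle) or two such edges (a $C_4$); if only one edge lies outside, the same covering count is run on the union of the two exceptional edges against the $2\ell-3$ shared edges. Your Case 2 is essentially the paper's second case in spirit: the covering edges of a disjoint cross-pair $E,F$ must all be common, and $|\mathcal{E}\cap\mathcal{F}|\le 2\ell-3$ forces at least two common edges to meet both, which is then killed by the triangle/$C_4$/linearity trichotomy. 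Your Case 1 is genuinely different: rather than a linear covering count on the union of two edges from the \emph{same} side, you build two pairwise-disjoint stars (one in each family, anchored at the common vertex of an intersecting cross-pair), rule out all cross-intersections between the stars via a $C_4$, and close with the quadratic count $(\ell-1)^2>\ell(\ell-2)$. This costs more bookkeeping — in particular the distinctness checks for the four cycle vertices $v_0,u_i,x,v_j$ and the verification that $E_{v_j}\neq F_i$ (which holds because $E_{v_j}$ meets $F$ while $F_i$ does not), all of which do go through — but it lets you avoid distinguishing $|\mathcal{E}\setminus\mathcal{F}|\ge 2$ from $|\mathcal{E}\setminus\mathcal{F}|=1$. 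The paper's argument is shorter and its counting has the same one-vertex margin as yours; your version is a valid, uniform alternative whose case split (does some cross-pair intersect?) replaces the paper's.
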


\begin{proof}
Suppose $G$ is not $(2 \ell -2)$-union-free, then there exist two distinct collections of edges 
$\mathcal{D} = \{D_1, \dots, D_{2\ell-2}\}$ and 
$\mathcal{E} = \{E_1, \dots, E_{2\ell-2}\}$
such that
\[\bigcup_{i=1}^{2\ell-2} D_i = \bigcup_{i=1}^{2\ell-2} E_i.\]

Consider the case when there are two sets $D_1,D_2$ that are both not members of $\mathcal{E}$. If $D_1$ and $D_2$ are not disjoint, then their union has $2\ell-1$ elements. These elements are covered by the union of the $E_i$s and each $E_i$ contains at most one element from each of the sets $D_1$ and $D_2$. Thus there must be an $E_i$ that intersects both $D_1$ and $D_2$ in distinct vertex. Therefore, $E_i, D_1,D_2$ form a triangle; a contradiction. On the other hand, if $D_1$ and $D_2$ are disjoint, then their union has $2\ell$ elements. These elements are covered by the union of the $E_i$s and each $E_i$ contains at most one element from each of the sets $D_1$ and $D_2$. Thus there must be an $E_i$ and $E_j$ that intersect both $D_1$ and $D_2$ in four distinct vertices. Therefore, $E_i, D_1, E_j,D_2$ form a $C_4$; a contradiction.

Consider the case when there is exactly one set in $\mathcal{D}$ (say $D_1$) that is not a member of $\mathcal{E}$. Consequentially, there is a set in $\mathcal{E}$ (say $E_1$) that is not in $\mathcal{D}$. The remaining  $2\ell-3$ members of $\mathcal{D}$ and $\mathcal{E}$ are the same. The union of $D_1$ and $E_1$ has at least $2\ell-1$ elements. All the vertices of the union except for the intersection must be covered by the remaining $2\ell - 3$ edges of $\mathcal{D}$. As in the previous case we get either a triangle or a $C_4$ (depending on whether $D_1$ and $E_1$ intersect); a contradiction.
\end{proof}

Ellis and Linial~\cite{EllisLinial2014} (using a result of Cooper, Frieze, Molloy, and Reed~\cite{CooperFriezeMolloyReed1996}) constructed a regular uniform hypergraph with girth at least $5$.

\begin{thm}[Ellis and Linial~\cite{EllisLinial2014}]\label{ellis-linial}
Fix integers $\ell \geq 3$ and $k \geq 2$. Then for every $m$ large enough such that $\ell$ divides $m$, there exists a linear $k$-regular $\ell$-uniform hypergraph on $m$ vertices with girth at least $5$.
\end{thm}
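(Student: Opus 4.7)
The plan is to use the configuration model for random $k$-regular $\ell$-uniform hypergraphs: take $km$ half-edges divided into $m$ bundles of size $k$ (one per vertex), and choose uniformly at random a partition of these half-edges into $km/\ell$ blocks of size $\ell$, each of which becomes a hyperedge. The divisibility hypothesis $\ell \mid m$ guarantees that $km$ is divisible by $\ell$, so the model is well defined, and the resulting multihypergraph is automatically $k$-regular with $km/\ell$ edges.

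First, I would estimate the expected number of each type of ``bad'' substructure in this random model: (i) hyperedges containing a repeated vertex (violating uniformity), (ii) pairs of edges sharing at least two vertices (violating linearity), (iii) Berge-triangles, and (iv) Berge-$C_4$'s. Each such substructure is specified by a bounded number of vertex and half-edge choices, and a routine calculation in the configuration model shows that each of these expected counts converges to a constant depending only on $k$ and $\ell$ as $m\to\infty$. This is essentially the content of the Cooper--Frieze--Molloy--Reed theorem cited in the excerpt, applied in the hypergraph setting.

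Second, I would apply the deletion (alteration) method. By Markov's inequality, with probability bounded away from zero, the total number of bad substructures is at most some constant $C=C(k,\ell)$. Remove one carefully chosen hyperedge from each bad substructure so as to destroy it; the result is a linear $\ell$-uniform hypergraph of girth at least $5$ that differs from being $k$-regular by at most $C$ missing edges, which affects only finitely many vertices.

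The main obstacle is restoring exact $k$-regularity after the deletions while preserving both linearity and the girth condition. I would handle this by a switching argument: pair up the finitely many vertices whose degree has dropped and, for each pair, pick a hyperedge lying outside the girth-$4$ neighbourhood of those vertices and swap one of its incidences onto a dangling half-edge at a low-degree vertex. For $m$ sufficiently large such distant edges are plentiful, so each swap can be performed without introducing a short Berge cycle or a repeated pair of vertices. Iterating finitely many times produces the desired $k$-regular linear $\ell$-uniform hypergraph on $m$ vertices of girth at least $5$. The step I expect to be most delicate is verifying that a single global choice of swaps can simultaneously repair all degree deficiencies without creating any new forbidden substructure, but since the defects involve only $O(1)$ vertices and the ``safe zone'' for switches has size $\Theta(m)$, a greedy implementation succeeds for all large enough $m$.
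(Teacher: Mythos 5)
First, note that the paper does not prove this statement at all: Theorem~\ref{ellis-linial} is quoted from Ellis and Linial, who in turn rely on the Cooper--Frieze--Molloy--Reed analysis of the configuration model for random regular uniform hypergraphs. So your choice of starting point --- the configuration model with $km$ half-edges grouped into blocks of size $\ell$ --- is exactly the right framework and matches the spirit of the cited construction. The cleaner way to finish from there, and the one the cited result actually supports, is to use that the numbers of Berge cycles of each fixed length (including ``cycles'' of length $1$ and $2$, i.e.\ repeated vertices within a block and pairs of blocks meeting in two vertices) converge jointly to independent Poisson random variables with constant means depending only on $k$ and $\ell$. Hence with probability bounded away from zero the random configuration has \emph{no} bad substructure whatsoever, and one obtains a simple, linear, $k$-regular, $\ell$-uniform hypergraph of girth at least $5$ directly, with no alteration step.

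Your Markov-plus-deletion route has a genuine gap in the repair phase. After deleting $C$ hyperedges the hypergraph has $km/\ell - C$ edges and $\ell C$ vertex-incidences missing, so no amount of ``swapping an incidence of a far-away edge onto a dangling half-edge'' can restore $k$-regularity: that operation preserves the edge count and the total degree, so it merely relocates a deficiency from one vertex to another rather than eliminating it. Likewise ``pairing up'' deficient vertices is the graph ($\ell=2$) picture; for $\ell\geq 3$ you must eventually \emph{add} $C$ new hyperedges, each consisting of $\ell$ deficient vertices. This can be salvaged --- first use your switching to disperse the $\ell C$ deficiencies onto pairwise far-apart vertices, then partition them into $C$ groups of size $\ell$ and insert a new hyperedge on each group, checking that no two vertices of a new edge lie within distance $3$ of one another --- but as written the argument does not produce a $k$-regular hypergraph. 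Given that the Poisson (or even a direct second-moment/Bonferroni) argument yields the theorem with positive probability and no surgery, the alteration step is both the weakest and the least necessary part of your proposal.
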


We now construct a $d$-union-free hypergraph on at most $\left \lceil \frac{(\lceil d/2\rceil +1)(n-1)}{k}\right\rceil + (\lceil d/2\rceil+1)$ vertices and with at least $n$ hyperedges. This will be the dual hypergraph of a $d$-separating family of sets which gives the upper bound in Theorem~\ref{thm:UBNAdaptive}.

Set $\ell = \lfloor d/2 \rfloor +1$ and let $q$ be the smallest integer such that $\lceil \frac{\ell n}{k} \rceil \leq q$ and $q$ is divisible by $\ell$.
Thus
\[\left \lceil \frac{\ell (n-1)}{k} \right \rceil \leq q \leq \left \lceil \frac{\ell (n-1)}{k} \right \rceil + \ell = \left\lceil \frac{(\lceil d/2\rceil +1)(n-1)}{k}\right\rceil + (\lceil d/2\rceil+1).\]
Let $\mathcal{H}$ be a linear $k$-regular $\ell$-uniform hypergraph on $q$ vertices (by Theorem~\ref{ellis-linial}). The number of hyperedges in $\mathcal{H}$ is
\[\frac{kq}{\ell} \geq \frac{k}{\ell} \left \lceil \frac{\ell (n-1)}{k} \right \rceil \geq n-1.\]

By Lemma~\ref{girth-sep} $\mathcal{H}$ is $d$-union-free (in fact, when $d$ is odd $\mathcal{H}$ is $(d+1)$-union-free). Now let us add the empty set (as a hyperedge) to $\mathcal{H}$ to get a hypergraph with at least $n$ hyperedges. This new hypergraph is still $d$-union-free.

\section{Bounds for small defective sets}\label{sec:LBdNAdaptive}
In this section we prove Theorem~\ref{thm:UBtightd23}.

\subsection{Two defective elements -- Proof of Theorem~\ref{thm:UBtightd23}(a)}

Instead of applying the theorem of Ellis and Linial as above, we can use a version of the classic result of Erd\H{o}s and Sachs~\cite{ErdosSachs1963} on the existence of graphs of arbitrary girth. This allows for a concrete bound on the threshold for $n$.

\begin{thm}[Erd\H os and Sachs]\label{ErdosSachs}
Fix integers $k\geq 2$ and $g\geq 4$, and let $m \geq 4k^{g}$ be an even integer. Then there exists a $k$-regular graph on $m$ vertices with girth at least $g$.
\end{thm}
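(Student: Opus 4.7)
The plan is to carry out the classical edge-switching argument due to Erd\H{o}s and Sachs, quantified so as to yield the explicit threshold $m \ge 4k^g$. Start with any $k$-regular graph on $m$ vertices (since $m$ is even, $km$ is even, so such a graph exists; e.g.\ the circulant $\mathrm{Cay}(\mathbb{Z}_m;\{\pm 1,\dots,\pm\lceil k/2\rceil\})$ after adjusting for parity of $k$). Among all $k$-regular graphs on $m$ vertices choose $G$ that minimises the number of cycles of length less than $g$; call these \emph{short} cycles. If $G$ has no short cycles we are done, so assume for contradiction it has one and fix an edge $uv$ lying on it.

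Next invoke the standard breadth-first expansion bound: for any vertex $w$,
\[
|B(w,g-1)| \;\le\; 1+\sum_{i=1}^{g-1} k(k-1)^{i-1} \;\le\; 2k^{g-1}.
\]
Let $N$ be the set of vertices within distance $g-1$ of $\{u,v\}$; then $|N|\le 4k^{g-1}$, and the \emph{far set} $F := V(G)\setminus N$ has size $\ge m-4k^{g-1}$. The edges incident to $N$ number at most $k|N|\le 4k^{g}$, whereas $|E(G)|=km/2\ge 2k^{g+1}$, so some edge $xy$ has both endpoints in $F$. Now perform the switch: delete $uv$ and $xy$ and add $ux,\,vy$ (if either is already present, use the alternative pairing $uy,\,vx$; at least one pairing works because $x,y\notin N(u)\cup N(v)$). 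The resulting graph $G'$ is again $k$-regular on the same vertex set.

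The goal is to show $G'$ has strictly fewer short cycles than $G$, which contradicts the minimal choice of $G$. The short cycle through $uv$ is destroyed in $G'$, so it suffices to rule out \emph{new} short cycles. Any new cycle uses at least one of the edges $ux$, $vy$. If it uses only $ux$, it closes via a $u$--$x$ path in $G\setminus\{uv,xy\}$; such a path has length at least $d_G(u,x)\ge g$ because deleting edges cannot decrease distances and $x\in F$. A symmetric argument handles a cycle using only $vy$. The remaining case is a cycle using both new edges, which decomposes into a short walk from (say) $u$ to $v$ and a short walk from $y$ to $x$ in $G\setminus\{uv,xy\}$; each of those walks has length $\ge g$ by the same distance bound, so the cycle has length $\ge 2g>g$.

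The main obstacle is the last step: rigorously checking that no configuration of a cycle through one or both of the swapped edges can have length less than $g$. This is where the choice of threshold $g-1$ in the definition of $N$, and the inequality $m\ge 4k^g$, are used tightly: they are exactly what is needed both to guarantee an edge in the far set and to force every potentially new cycle to travel at least $g$ steps through the unchanged part of $G$. Once that is verified, the short-cycle count drops strictly after the switch, contradicting minimality and completing the proof.
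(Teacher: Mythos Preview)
The paper does not prove this statement; it is quoted as a classical result of Erd\H{o}s and Sachs and used as a black box. So there is no in-paper argument to compare against, but your sketch contains a genuine gap worth naming.

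The switching outline and the counting that locates a far edge $xy$ from the hypothesis $m\ge 4k^g$ are fine. The problem is the final case, a new cycle through both inserted edges $ux$ and $vy$. Deleting these two edges from such a cycle leaves two paths in $G\setminus\{uv,xy\}$, and there are \emph{two} ways the four endpoints can pair up: either the paths run $u$--$y$ and $x$--$v$, or they run $u$--$v$ and $x$--$y$. In the first pairing your far-distance bound applies and both paths have length at least $g$. In the second pairing it does not: $u$ and $v$ are adjacent in $G$ and lie on a short cycle by assumption, so $G\setminus\{uv\}$ already contains a $u$--$v$ path of length $<g-1$; likewise nothing prevents a short $x$--$y$ path in $G\setminus\{xy\}$. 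Your sentence ``each of those walks has length $\ge g$ by the same distance bound'' is therefore false for this pairing, and the new cycle $u\,x\,P_1\,y\,v\,P_2\,u$ can be short. Your extremal choice (minimise the total number of cycles of length $<g$) does not save the day either: if $uv$ lies on $a$ short cycles and $xy$ on $b$, the switch destroys about $a+b$ of them but can manufacture up to $ab$ new short cycles through this second pairing.

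The standard remedy is to sharpen the extremal choice lexicographically: take $G$ of \emph{maximum} girth $g'<g$, and among those, with the fewest $g'$-cycles. Then in the bad pairing, $P_1\cup\{xy\}$ and $P_2\cup\{uv\}$ are cycles of $G$, forcing $|P_1|,|P_2|\ge g'-1$, so the new cycle has length $\ge 2g'>g'$; the other cases yield length $\ge g'+1$ via the far condition (distance $\ge g>g'$ suffices). Hence no cycle of length $\le g'$ is created, while the chosen $g'$-cycle through $uv$ is destroyed, and the contradiction goes through.
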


The following proposition gives the upper bound in Theorem~{\ref{thm:UBtightd23}(a)}.
\begin{prop} Fix an integer $k \geq 2$ and let $n > 2k^7$. Then
\[q(n, 2, k) \leq \left\lceil  \frac{2(n-1)}{k} \right\rceil.\]
\end{prop}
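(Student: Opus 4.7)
The plan is to mirror the general dual-hypergraph construction in the proof of Theorem~\ref{thm:UBNAdaptive}, but to replace Ellis--Linial by the more explicit Theorem~\ref{ErdosSachs} so as to pin down a concrete threshold for $n$. For $d=2$ the relevant uniformity is $\ell=\lfloor d/2\rfloor+1=2$, so the dual hypergraph is just a simple graph. Set $q:=\lceil 2(n-1)/k\rceil$. It suffices to build a graph $H$ on exactly $q$ vertices with maximum degree at most $k$, with girth at least $5$, and with at least $n-1$ edges: by Lemma~\ref{girth-sep} such a graph is $2$-union-free, and appending the empty set as an additional hyperedge yields a $2$-union-free hypergraph on $q$ vertices with at least $n$ hyperedges, i.e.\ the dual hypergraph of a $2$-separating family of $q$ queries, each of size at most $k$.

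The construction splits on the parity of $q$. If $q$ is even, Theorem~\ref{ErdosSachs} applied with $g=5$ produces a $k$-regular graph on $q$ vertices with girth at least $5$; by the definition of $q$ it has $kq/2\ge n-1$ edges, so we are done. If $q$ is odd, we instead apply Theorem~\ref{ErdosSachs} with $g=5$ on $q-1$ vertices (which is even) to obtain a $k$-regular graph $G_0$ on $q-1$ vertices of girth at least $5$. This $G_0$ has $k(q-1)/2$ edges; using the minimality of $q$ one checks that the shortfall $n-1-k(q-1)/2$ is a positive integer at most $\lfloor k/2\rfloor$. We then adjoin a new vertex $v$ and make up the shortfall by joining $v$ to a carefully chosen set $N$ of $n-1-k(q-1)/2$ vertices of $G_0$.

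The principal obstacle is to preserve the girth condition when $v$ is adjoined. Any triangle or $4$-cycle through $v$ would force two members of $N$ to lie at distance at most $2$ in $G_0$, so it suffices to choose $N$ as an independent set in the square graph $G_0^2$. Because $G_0$ is $k$-regular of girth at least $5$, the closed $2$-neighborhood of any vertex has exactly $1+k+k(k-1)=k^2+1$ elements, so a greedy selection yields an independent set in $G_0^2$ of size at least $(q-1)/(k^2+1)$. This exceeds the required $|N|\le k$ once $q-1\ge k^3+k$, and this, together with the Erd\H{o}s--Sachs requirement $q-1\ge 4k^5$, is comfortably implied by the hypothesis $n>2k^7$. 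This completes the construction and yields $q(n,2,k)\le q=\lceil 2(n-1)/k\rceil$.
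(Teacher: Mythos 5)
Your even case matches the paper exactly, but your odd case has a genuine flaw: it violates the query-size constraint. The queries correspond to the \emph{vertices} of the dual graph, and a query has size at most $k$ precisely when the corresponding vertex has degree at most $k$ — a requirement you correctly state at the outset ("maximum degree at most $k$"). In your odd-$q$ construction, $G_0$ is already $k$-regular on $q-1$ vertices, so joining the new vertex $v$ to any nonempty set $N\subseteq V(G_0)$ raises the degree of every vertex of $N$ to $k+1$. Since you verify that the shortfall $n-1-k(q-1)/2$ is a \emph{positive} integer, $N$ is nonempty and the constraint is always broken. The girth analysis (choosing $N$ independent in $G_0^2$) and the arithmetic are fine, but they do not rescue the degree bound, and there is no obvious local repair: deleting one $G_0$-edge at each vertex of $N$ to make room would cancel exactly the edges you are trying to gain.

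The paper sidesteps this by going in the other direction: for odd $q$ it takes a $k$-regular graph $G'$ on $q+1$ vertices with girth at least $6$, deletes a vertex $x$ (so the $k$ vertices of $X=N(x)$ drop to degree $k-1$), and adds a matching of size $\lfloor k/2\rfloor$ inside $X$. This restores at most one unit of degree to each affected vertex, so the maximum degree stays at $k$; the girth-$6$ hypothesis guarantees $X$ is independent with pairwise distances at least $4$ in $G'-x$, so the new matching creates no cycle of length less than $5$; and the edge count $k(q+1)/2-k+\lfloor k/2\rfloor\ge n-1$ still goes through. If you want to keep your "add a vertex" strategy, you would need to first remove enough edges of $G_0$ incident to $N$ and re-add strictly more than you removed elsewhere, which is essentially a disguised version of the paper's vertex-deletion trick.
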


\begin{proof}
Let $q =  \left\lceil  \frac{2(n-1)}{k} \right\rceil \geq 4k^6$. We will construct a graph $G$ with girth at least $5$ on $q$ vertices with $n-1$ edges. By Lemma~\ref{girth-sep} we have that $G$ is a $2$-union-free (hyper)graph. Then we add the empty set (as a hyperedge) to $G$ to get a hypergraph on $q$ vertices with $n$ hyperedges. It is easy to see that if $G$ is $2$-union-free, then adding the empty set cannot destroy the $2$-union-free property.

We distinguish two cases based on the parity of $q$. First let us assume $q$ is even. By Theorem~\ref{ErdosSachs} there exists a $k$-regular graph $G$ on $q$ vertices with girth at least $5$. The number of edges in $G$ is $qk/2\geq n-1$. 

Now suppose $q$ is odd. By Theorem~\ref{ErdosSachs} there exists a $k$-regular graph $G'$ on $q+1$ vertices with girth at least $6$. Let us remove an arbitrary vertex $x$ from $G'$. Let $X$ be the neighborhood of $x$. The graph $G'$ is triangle-free, so $X$ is an independent set. Furthermore, $G'$ is $k$-regular, so $|X|=k$, so we can add a matching of size $\lfloor k/2\rfloor$ to the vertices of $X$. Let the resulting graph be $G$. It is easy to see that as $G'$ had girth at least $6$, the graph $G$ will have girth at least $5$. The number of edges in $G$ is at least
\[\frac{k(q+1)}{2} -k + \left \lfloor \frac{k}{2} \right \rfloor \geq n -1- \frac{k}{2} + \left\lfloor \frac{k}{2}\right \rfloor \geq n - 1- \frac{1}{2}.\]
Therefore, the number of edges in $G$ is at least $n-1$. 
\end{proof}

We now prove the lower bound on $q(n,2,k)$. Fix $k$ and $n$ and let $q$ be the minimal integer such that there exists a $2$-separating family with query size at most $k$. In the dual hypergraph $\mathcal{H}$, let $e\leq 1$ be the number of hyperedges of size $0$, let $s$ be the number of hyperedges of size $1$ that are contained in a hyperedge of size at least $3$, let $s'$ be the number of remaining hyperedges of size $1$ and let $t$ be the number of hyperedges of size at least $3$. Therefore, the number of hyperedges of size $2$ is $n-e-s-s'-t$. We need two simple lemmas relating these values.

\begin{lem}\label{size3}
Every hyperedge of size at least $3$ of $\mathcal{H}$ contains at most one hyperedge of size $1$. That is $s \leq t$.
\end{lem}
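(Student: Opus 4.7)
The plan is to prove the intermediate statement---that every hyperedge of $\mathcal{H}$ of size at least $3$ contains at most one singleton hyperedge of $\mathcal{H}$---by a direct application of the $2$-union-free property of $\mathcal{H}$, and then to derive $s \leq t$ by a one-line injection argument.

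For the intermediate statement, I would argue by contradiction. Suppose there is a hyperedge $E \in \mathcal{H}$ with $|E| \geq 3$ that contains two distinct singleton hyperedges $\{a\}, \{b\} \in \mathcal{H}$ (so $a, b \in E$ with $a \neq b$). Because $|E| \geq 3$, the hyperedge $E$ differs from both $\{a\}$ and $\{b\}$, so the two subfamilies $\{E, \{a\}\}$ and $\{E, \{b\}\}$ are genuine pairs of distinct hyperedges of $\mathcal{H}$; they are distinct from each other because $\{a\} \neq \{b\}$. Yet since $\{a\} \subseteq E$ and $\{b\} \subseteq E$, both pairs have union exactly $E$. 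This contradicts the fact that $\mathcal{H}$ is $2$-union-free.

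To conclude $s \leq t$, I would invoke the intermediate statement as follows. For each singleton $\{v\} \in \mathcal{H}$ counted by $s$, pick, by the definition of $s$, some hyperedge $E(v) \in \mathcal{H}$ with $|E(v)| \geq 3$ and $v \in E(v)$. This defines a map from the set of $s$ singletons into the set of $t$ hyperedges of size at least $3$. If $E(v_1) = E(v_2)$ for two distinct singletons $\{v_1\}, \{v_2\}$, then this common hyperedge contains both singletons, contradicting the intermediate statement; hence the map is injective and $s \leq t$.

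The argument is short and I do not anticipate any real obstacle: the entire content lies in the observation that taking the union of a hyperedge $E$ with any singleton contained in it leaves $E$ unchanged, which is precisely the mechanism that converts two singletons inside a common larger hyperedge into two distinct pairs with the same union.
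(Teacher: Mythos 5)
Your proof is correct and uses the same mechanism as the paper: if a hyperedge $h$ of size at least $3$ contained two singleton hyperedges $\{a\}$ and $\{b\}$, then $h\cup\{a\}=h=h\cup\{b\}$ gives two distinct pairs with the same union, violating the $2$-union-free property. The paper leaves the final injection step ($s\leq t$) implicit, whereas you spell it out, but the argument is the same.
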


\begin{proof}
Assume otherwise that the hyperedge $h$ contains two hyperedges, say $\{a\}$ and $\{b\}$. Then $h \cup \{ a \} = h = h \cup \{b\}$ contradicting the $2$-union-free property of $\mathcal{H}$.
\end{proof}

\begin{lem}\label{deg1} 
If $\{a\}$ and $\{a,b\}$ are hyperedges of $\mathcal{H}$, then the degree of the vertex $b$ is $1$. 
\end{lem}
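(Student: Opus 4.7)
The plan is a short argument by contradiction, exploiting the $2$-union-free property of the dual hypergraph $\mathcal{H}$ directly. Suppose $\{a\}$ and $\{a,b\}$ are both hyperedges of $\mathcal{H}$, and suppose for contradiction that the vertex $b$ has degree at least $2$. Then there exists a hyperedge $F$ distinct from $\{a,b\}$ that also contains $b$.

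Next, I would produce two different unordered pairs of hyperedges with the same union. The natural candidates are $\{\{a\},F\}$ and $\{\{a,b\},F\}$. These two pairs are distinct, since $\{a\}\neq\{a,b\}$ and since $F$ cannot equal $\{a\}$ (as $b\in F$ but $b\notin\{a\}$) and $F\neq \{a,b\}$ by choice. However,
\[\{a\}\cup F \;=\; \{a,b\}\cup F,\]
since $b\in F$ forces $b$ to lie in the right-hand side already, and adding $b$ to the left-hand side does not change it. This equality of unions contradicts the fact that $\mathcal{H}$ is $2$-union-free, so $b$ must have degree exactly $1$.

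I do not foresee any obstacle: the lemma is essentially a direct unpacking of the definition of $2$-union-free once one spots the correct pair of pairs. The only small subtlety is to make sure the two pairs really are distinct as unordered pairs of hyperedges, which amounts to checking that $F\neq\{a\}$ (immediate from $b\in F$) and $F\neq\{a,b\}$ (by the choice of $F$ witnessing $b$ having degree $\geq 2$).
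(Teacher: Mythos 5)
Your proof is correct and is essentially identical to the paper's: both take a second hyperedge $F$ (the paper calls it $h$) containing $b$ and observe that $\{a\}\cup F=\{a,b\}\cup F$ violates the $2$-union-free property. Your extra check that the two pairs are genuinely distinct is a welcome bit of care, but the argument is the same.
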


\begin{proof}
Assume there is an edge incident to $b$, called $h$, different from $\{a, b\}$. Then $h \cup \{a\}= h \cup \{ a,b \}$ contradicting the 2-union-free property of $\mathcal{H}$.
\end{proof}

Now let us count the number of pairs $(v,h)$ where $v$ is a vertex and $h$ is a hyperedge of $\mathcal{H}$ such that $v \in h$.

The maximum degree in $\mathcal{H}$ is $k$. Furthermore,
for each hyperedge $\{a\}$ of size $1$ not contained in a hyperedge of size at least $3$, there is a vertex of degree $1$ in $\mathcal{H}$. Indeed, either $\{a\}$ is isolated and thus $a$ is of degree $1$ or $\{a\}$ is in some hyperedge $\{a,b\}$ and by Lemma~\ref{deg1} we have that $b$ is of degree $1$. Thus we have at least $s'$ vertices of degree $1$. This implies that the number of pairs $(v,h)$ is at most
\[k(q-s') +s' = kq - (k-1)s'.\]
On the other hand by counting the size of all hyperedges we get the number of pairs $(v,h)$ is at least
\[s+s'+3t+2(n-e-s-s'-t).\]
Applying Lemma~\ref{size3} gives
\[s+s'+3t+2(n-e-s-s'-t) \geq 2s+s'+2t+2(n-e-s-s'-t) = 2n-2e+s'\geq 2n-2-s'.\]
Combining the upper and lower estimates for the number of pairs $(v,h)$ yields
\[2n-2-s' \leq kq - (k-1)s'.\]
Using the fact that $k \geq 2$ and solving for $q$ gives the lower bound.

\subsection{Three defective elements -- Proof of Theorem~\ref{thm:UBtightd23}(b)}\label{subsec}

The upper-bound follows from Theorem~\ref{thm:UBNAdaptive}.

For the lower bound fix $k$ and $n$ and let $q$ be the minimal integer such that there exists a $3$-separating family $\mathcal{A}$ with query size at most $k$. 
Let $\mathcal{H}$ be the dual hypergraph for $\mathcal{A}$. Note that this hypergraph is not necessarily uniform.

As in the case where $d=2$, we sum the sizes of all hyperedges in $\mathcal{H}$. There is at most $1$ hyperedge of size $0$ and at most $q$ many hyperedges of size $1$. First we show that there are not too many hyperedges of size $2$ in $\mathcal{H}$.

\begin{lem}\label{tree-structure}
The graph $G$ formed by the hyperedges of size $2$ in $\mathcal{H}$ is a forest.
\end{lem}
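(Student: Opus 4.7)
The plan is to argue by contradiction: assume $G$ contains a cycle and exhibit two distinct collections of three hyperedges of $\mathcal{H}$ with the same union, which contradicts the fact that $\mathcal{H}$ is $3$-union-free (equivalent to $\mathcal{A}$ being $3$-separating, as established earlier). Pick a shortest cycle in $G$, of length $\ell \geq 3$, with consecutive vertices $v_1, \ldots, v_\ell$ and edges $e_i = \{v_i, v_{i+1}\}$ where indices are taken modulo $\ell$. I then split into the cases $\ell = 3$ and $\ell \geq 4$.

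For $\ell \geq 4$, I would compare the triples $\{e_1, e_2, e_4\}$ and $\{e_1, e_3, e_4\}$. They differ only in the middle entry ($e_2$ versus $e_3$), so they are distinct as collections of hyperedges. A direct union computation shows that when $\ell = 4$ both unions equal $\{v_1, v_2, v_3, v_4\}$ (since $e_4 = \{v_4, v_1\}$ closes the cycle), while for $\ell \geq 5$ both unions equal $\{v_1, v_2, v_3, v_4, v_5\}$. Either way this violates $3$-union-freeness.

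For $\ell = 3$, the three triangle edges by themselves yield only one triple, so I would pick any additional hyperedge $f \in \mathcal{H} \setminus \{e_1, e_2, e_3\}$; such an $f$ exists because $\mathcal{H}$ has $n$ hyperedges and $n$ is large. The triples $\{f, e_1, e_2\}$ and $\{f, e_1, e_3\}$ are distinct (as $e_2 \neq e_3$) and both have union $f \cup \{v_1, v_2, v_3\}$, once again contradicting $3$-union-freeness. The only real subtlety is this triangle case: the three cycle edges alone do not supply a second triple, so one must appeal to the existence of a fourth hyperedge, which is the sole place where largeness of $n$ enters the argument.
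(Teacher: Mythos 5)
Your proof is correct and follows essentially the same route as the paper's: the paper rules out triangles, $C_4$'s and paths of length four via the same union identities (its path-of-length-four identity $e\cup f\cup h=e\cup g\cup h$ is exactly your $\ell\geq 4$ computation). The only minor difference is that for the triangle (and $C_4$) the paper invokes that a $3$-separating family is also $2$-separating and uses $2$-union-freeness, whereas you stay entirely within $3$-union-freeness by padding with a fourth hyperedge $f$ --- which is fine, and requires only $n\geq 4$ rather than ``$n$ large.''
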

\begin{proof}
We show that $G$ contains no triangle, no $C_4$ and no path of length $4$. Such a graph is clearly a forest.

Recall that a $3$-separating family is also $2$-separating. Let $e,f,g$ be the edges of a triangle in $G$, then it is immediate that $e \cup f = e \cup g$ which violates the $2$-union-free property of $\mathcal{H}$. Similarly, if $e,f,g,h$ are the edges of a $C_4$ in $G$ (such that $e$ and $g$ are disjoint), then $e \cup g = f \cup h$ which violates the $2$-union-free property of $\mathcal{H}$. Finally, if $e,f,g,h$ are the edges of a path of length $4$ (in this order), then $e \cup f \cup h = e \cup g \cup h$ which violates the $3$-union-free property of $\mathcal{H}$. 
\end{proof}

Thus we have at most $q-1$ hyperedges of size $2$ in $\mathcal{H}$. Therefore, the sum of the sizes of the hyperedges of $\mathcal{H}$ is at least
\[q + 2(q-1) + 3(n-1-2q+1) = 3n - 3q -2 \]
The maximum degree in $\mathcal{H}$ is $k$ so the above sum is at most $qk$. Combining these two estimates and solving for $q$ yields
\[\frac{3n-2}{k+3} \leq q.\]

\section{Further results}\label{sec:ConclOPbs}

\subsection{Fixed query size}

Throughout the paper we have allowed queries to have size at most $k$. Katona~\cite{Katona1966} showed that when searching for a fixed defective set of size at most $1$ there is no difference in the minimum number of necessary queries whether we restrict the queries size to be at most $k$ or to be exactly $k$. Therefore it is somewhat unexpected that in the case of searching for a fixed defective set of size exactly $d$, for $d\geq 2$, we can have different answers depending on whether the query size is at most $k$ or exactly $k$.

To illustrate, let us examine the simplest case when $k=2$ and defective set is of size exactly $d$, for a given $d \geq 3$. We distinguish between two kind of restrictions: (i) each query set is of size at most $2$ or (ii) each query set is of size exactly $2$. By asking queries of size $1$ we can identify the defective set with $n-1$ queries, so $q(n,d,2) \leq n-1$. On the other hand, if a family of queries is $d$-separating, then it is $2$-separating and we can use Theorem~\ref{thm:UBtightd23} to get
\[q(n,d,2) \geq q(n,2,2) = \left \lceil \frac{2(n-1)}{2} \right \rceil = n-1.\]

Therefore, we have the following simple corollary.

\begin{cor}
If $n\geq d\geq 3$, then
\[q(n,d,2) = n-1.\]
\end{cor}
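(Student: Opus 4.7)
The plan is to prove matching upper and lower bounds of $n-1$ on $q(n,d,2)$ when $d \geq 3$. The upper bound is essentially trivial and was already observed in the discussion preceding the corollary; the work is in the lower bound, where the key observation is that restricting to $d$-separating (exact size) inherits lower bounds from $2$-separating since any $d$-separating family is automatically $2$-separating.

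For the upper bound, I would exhibit the singleton construction: take $\mathcal{A} = \{\{1\}, \{2\}, \dots, \{n-1\}\}$. For any candidate defective set $Y$ of size exactly $d$, the answers to these $n-1$ queries reveal exactly which of $1, \dots, n-1$ lie in $Y$. Since $|Y|=d$ is known, the membership of the element $n$ is determined by counting. Hence $\mathcal{A}$ is a $d$-separating family of size $n-1$ with all queries of size at most $2$ (in fact, of size $1$), giving $q(n,d,2) \leq n-1$.

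For the lower bound, I would invoke the monotonicity of the separating property noted in the introduction: any $d$-separating family with $d \geq 2$ is also $2$-separating. Therefore any valid family for identifying defective sets of size exactly $d$ using queries of size at most $2$ is in particular a $2$-separating family with queries of size at most $2$, so
\[
q(n,d,2) \;\geq\; q(n,2,2).
\]
Applying Theorem~\ref{thm:UBtightd23}(a) with $k=2$ (and $n$ sufficiently large, as required by that theorem) gives
\[
q(n,2,2) \;=\; \left\lceil \frac{2(n-1)}{2} \right\rceil \;=\; n-1.
\]
Combining the two bounds yields $q(n,d,2) = n-1$, as claimed.

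There is no real obstacle here; the only subtlety worth flagging is that the chain of inequalities implicitly uses the ``$n$ sufficiently large'' hypothesis inherited from Theorem~\ref{thm:UBtightd23}(a), so the corollary is meaningful in that asymptotic regime. The conceptual content is that for $k=2$ the singleton strategy is already optimal, and the gap between ``at most $2$'' and ``exactly $2$'' query sizes pointed out in the surrounding discussion does not appear in this extreme case because both are pinned to $n-1$.
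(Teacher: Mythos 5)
Your proposal is correct and matches the paper's own argument: the upper bound via singleton queries and the lower bound via the monotonicity $q(n,d,2) \geq q(n,2,2)$ combined with Theorem~\ref{thm:UBtightd23}(a). Your remark about the implicit ``$n$ sufficiently large'' hypothesis inherited from that theorem is a fair point that the paper itself glosses over.
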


However, if we only allow queries of size exactly $2$ we cannot determine such a defective set with only $n-1$ queries.

\begin{prop}
Let $q$ be the minimum number such that there exists a family of queries of size exactly $2$ that can determine any defective set of size $d$, for a given $d\geq 3$. Then $q \geq n$.
\end{prop}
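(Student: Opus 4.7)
The plan is to identify the family $\mathcal{A}$ with the edge set of a graph $G$ on vertex set $[n]$, so $|E(G)| = q$, and for $v \in V(G)$ write $\delta(v) \subseteq E(G)$ for the set of edges incident to $v$ and $N(v) \subseteq V(G)$ for the set of vertices adjacent to $v$. The $d$-separating property is equivalent to the map $D \mapsto \bigcup_{v \in D} \delta(v)$ being injective on $\binom{[n]}{d}$. I would assume $q \leq n - 1$ for contradiction and construct two distinct $d$-subsets with the same edge-union.

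The first step is a swap criterion: if there exist distinct $a, b \in [n]$ with $|(N(a) \cup N(b)) \setminus \{a,b\}| \leq d - 1$, then the family fails to be $d$-separating. To see this, take any $(d-1)$-subset $S$ of $[n] \setminus \{a,b\}$ containing $(N(a) \cup N(b)) \setminus \{a,b\}$ (which exists since $n \geq d + 1$), and set $D_1 = S \cup \{a\}$ and $D_2 = S \cup \{b\}$. These are distinct $d$-subsets, and a direct check shows $\bigcup_{v \in D_1} \delta(v) = \bigcup_{v \in D_2} \delta(v)$: an edge $\{a, x\}$ on the left-hand side but not the right would need $x \notin S \cup \{b\}$, contradicting $N(a) \setminus \{b\} \subseteq S$, and the symmetric argument handles $b$.

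The second step is to exhibit such a pair $(a, b)$ whenever $q \leq n - 1$. Suppose instead that $|(N(a) \cup N(b)) \setminus \{a,b\}| \geq d$ for every distinct $a, b$, and case on the minimum degree of $G$. If every vertex has degree at least $2$ then $2q \geq 2n$, a contradiction. If $G$ has an isolated vertex $a$, applying the assumption to every $(a, b)$ forces $\deg(b) \geq d$, so $2q \geq d(n-1) \geq 3(n-1) \geq 2n$. If $G$ has a leaf $a$ with neighbor $w$, the pair $(a, w)$ yields $\deg(w) \geq d + 1$ while each other $(a, b)$ yields $\deg(b) \geq d - 1$; summing gives $2q \geq 1 + (d+1) + (d-1)(n-2) \geq 2n$ for all $d \geq 3$. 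Each alternative contradicts $q \leq n - 1$.

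The main obstacle is the swap criterion itself: recognizing that, because queries have size exactly $2$, the only way a vertex can be swapped out of $D$ without altering $\bigcup_{v \in D} \delta(v)$ is when all its neighbors (apart from the incoming vertex) are already inside $D$, and that this reduces the whole question to a combined-neighborhood bound on pairs. Once this is in place, the remainder is a short degree-sum computation.
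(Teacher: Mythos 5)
Your proof is correct, but it takes a genuinely different route from the paper. The paper works entirely in the dual hypergraph $\mathcal{H}$ (vertices are queries, hyperedges are the elements of $[n]$), observes that $\mathcal{H}$ is $2$-regular because every query has size exactly $2$, and then runs a counting argument on the sizes of the hyperedges, supported by structural lemmas: the size-$2$ hyperedges form a forest (using the $2$- and $3$-union-free properties), every size-$1$ hyperedge sits inside a hyperedge of size at least $3$, and the presence of the empty hyperedge excludes size-$1$ hyperedges altogether. You instead stay in the primal graph $G$ on $[n]$ whose edges are the queries, and your key lemma is an exchange criterion: if $|(N(a)\cup N(b))\setminus\{a,b\}|\le d-1$ for some pair $a\ne b$, then padding with a suitable $(d-1)$-set $S$ produces two $d$-sets $S\cup\{a\}$ and $S\cup\{b\}$ with the same incident edge set, killing separation. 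Your degree-sum case analysis (minimum degree $\ge 2$, isolated vertex, leaf) then shows this pair must exist whenever $q\le n-1$, and it correctly exploits $d\ge 3$ exactly where the $d=2$ case would break down, consistent with $q(n,2,2)=n-1$. The one caveat is that your swap step needs $n\ge d+1$ to choose $S$; this hypothesis is implicit in the proposition anyway (for $n\le d+1$ either only one defective set exists or no family of size-$2$ queries can separate), but you should state it. The trade-off: the paper's dual argument recycles Lemmas~\ref{size3}, \ref{deg1} and \ref{tree-structure} already proved for Theorem~\ref{thm:UBtightd23}, while your argument is self-contained, more elementary, and makes the combinatorial obstruction (a one-element swap invisible to all queries) explicit.
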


\begin{proof}
Let $\mathcal{H}$ be the dual hypergraph for the family of queries in the statement of the proposition. Then $\mathcal{H}$ is a $2$-regular hypergraph on $q$ vertices with $n$ hyperedges. If all hyperedges of $\mathcal{H}$ are of size $2$, then $\mathcal{H}$ is a forest (see Lemma~\ref{tree-structure} in Subsection~\ref{subsec}). Therefore, $\mathcal{H}$ has at least $n+1$ vertices, i.e. $q \geq n+1 \geq n$ and we are done.

Therefore, we may assume that $\mathcal{H}$ has at least one hyperedge of size other than $2$. In $\mathcal{H}$ let $e\leq 1$ be the number of hyperedges of size $0$, let $s$ be the number of hyperedges of size $1$, and let $t$ be the number of hyperedges of size at least $3$. Clearly as $\mathcal{H}$ is $2$-regular it cannot contain an isolated  hyperedge of size $1$. Furthermore, $\mathcal{H}$ cannot contain two hyperedges of the form $\{a\}$ and $\{a,b\}$ as $b$ must have degree $1$ in this case (see Lemma~\ref{deg1} in Subsection~\ref{subsec}). Therefore, every hyperedge of size $1$ is contained in a hyperedge of size at least $3$, i.e. $s \leq t$.

The sum of degrees in $\mathcal{H}$ is $2q$. By counting the sizes of all edges in $\mathcal{H}$ we obtain that
\[2q \geq s+3t+2(n-e-s-t).\]
If $e=0$, then using the fact that $s \leq t$ it follows that
\[2q \geq s+3t+2(n-s-t) \geq 2s + 2t + 2(n-s-t) = 2n\]
and we are done.

Now let us suppose that $e=1$, i.e. $\mathcal{H}$ contains the empty set as a hyperedge. 
In this case it is easy to see that $\mathcal{H}$ cannot contain any hyperedges of size $1$. Indeed if $\{a\}$ is a hyperedge, then there must be some $\{a,b,c\}$ hyperedge, but then 
\[\{a\} \cup \{a,b,c\} = \emptyset \cup \{a,b,c\}\]
violates the $d$-separating property of $\mathcal{H}$. Therefore, there must be at least one hyperedge of size at least $3$, i.e. $t\geq 1$. Thus,
\[2q \geq 3t+2(n-1-t) \geq 2n -2 + t \geq 2n-1.\]
Therefore $q \geq n$ and we are done.
\end{proof}

\subsection{Adaptive search}

We call the search model \emph{adaptive} if we ask the query sets in a sequence and allow that each query set $A$ may depend on the answer given for previous queries. 
As in the previous sections we are particularly interested in the case when the query sets are of size at most $k$. Let $Y$ be a defective set of at most $d$ elements.
The minimum number of queries required determine $Y$ among a set of size $n$ in the adaptive model is denoted by $t(n,\overline{d},k)$. In the case of $d=1$ the question was solved completely by Katona~\cite{Katona1973}.

\begin{thm}[Katona]\label{adaptive}
Let $n,k$ be integers, such that $k<n/2$, then 
\[t(n,\overline{1},k) = \left\lceil \frac{n}{k} \right\rceil - 2 + \left\lceil \log \left(n - k \left\lceil \frac{n}{k} \right\rceil + 2k \right) \right\rceil.\]
\end{thm}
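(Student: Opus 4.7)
The plan is to prove matching upper and lower bounds on $t(n,\overline{1},k)$. Writing $m=\lceil n/k\rceil$ and $n=(m-1)k+r$ with $1\leq r\leq k$, the identity $n-k\lceil n/k\rceil+2k=r+k$ lets me restate the target value as $m-2+\lceil\log(r+k)\rceil$, a form more convenient for both bounds.

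\textbf{Upper bound (explicit adaptive algorithm).} Partition $[n]$ into $m-2$ blocks of size $k$ and one remaining block of size $r+k$. Ask the $m-2$ full blocks as queries one at a time. As soon as some query returns yes, the defective element is confined to a $k$-set, which standard binary halving (each half is a legal query of size at most $k$) resolves in $\lceil\log k\rceil$ further queries. If all $m-2$ queries return no, the defective lies in the last block of size $r+k$, which halving finishes in $\lceil\log(r+k)\rceil$ queries. Since $r+k>k$, the all-no branch dominates, giving the claimed total $m-2+\lceil\log(r+k)\rceil$.

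\textbf{Lower bound (recursion on the largest solvable instance).} Let $M(q,k)$ denote the largest $n$ for which $q$ adaptive queries of size at most $k$ suffice. Any algorithm's first query has some size $i\leq k$; on a yes-answer we must identify the defective among the $i$ candidates in $q-1$ queries (forcing $i\leq M(q-1,k)$), and on a no-answer among the $n-i$ candidates (forcing $n-i\leq M(q-1,k)$). Maximising over $i\leq\min(k,M(q-1,k))$ yields the recursion
\[
M(0,k)=1,\qquad M(q,k)=M(q-1,k)+\min\bigl(k,\,M(q-1,k)\bigr)\quad(q\geq 1),
\]
which is tight by the algorithm above. Unrolling: $M(q,k)=2^q$ while $2^{q-1}\leq k$, and $M(q,k)=2^{L}+(q-L)k$ for $q\geq L$, where $L=\lfloor\log k\rfloor+1$.

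\textbf{Reconciling the two expressions.} The condition $k<n/2$ forces $n>2k\geq 2^{L}$, so the relevant $q$ lives in the second regime and the bound $2^{L}+(q-L)k\geq(m-1)k+r$ needs to be solved for $q$. The main work is then to verify that the smallest such integer $q$ coincides with $m-2+\lceil\log(r+k)\rceil$, which reduces to a two-case check on whether $r\leq 2^{L}-k$; this toggle governs both the value of $\lceil\log(r+k)\rceil$ between $L$ and $L+1$ and the value of $\lceil(n-2^{L})/k\rceil$ between $m-2$ and $m-1$, so the two expressions match case by case. This short case analysis is the only real calculation in the proof; the algorithmic and recursive steps themselves are immediate.
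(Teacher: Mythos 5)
Your proposal is correct, but note that the paper does not actually prove Theorem~\ref{adaptive}: it is quoted as Katona's result with a citation, and the only argument in the paper is a proof sketch of the \emph{generalisation} to $d>1$. Measured against that sketch, your upper bound is the same idea (disjoint size-$k$ blocks followed by binary halving on the block that answers yes, or on the leftover block of size $r+k\le 2k$ after all answer no), while your lower bound is genuinely sharper: the paper's sketch only invokes the information-theoretic count after $\lceil n/k\rceil-2$ forced negative answers, which cannot by itself pin down the exact constant, whereas your recursion $M(q,k)=M(q-1,k)+\min(k,M(q-1,k))$ with $M(0,k)=1$ gives the exact extremal function $M(q,k)=2^{L}+(q-L)k$ for $q\ge L=\lfloor\log k\rfloor+1$, and the final two-case check ($r\le 2^{L}-k$ versus $r>2^{L}-k$) does reconcile $L+\lceil(n-2^{L})/k\rceil$ with $m-2+\lceil\log(r+k)\rceil$ exactly as you claim; I verified both cases. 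Two small points you should make explicit. First, your argument (correctly) assumes there is \emph{exactly} one defective element: $M(0,k)=1$ and the leaf count $i\le M(q-1,k)$ on a yes-answer both use this. Under a literal reading of the paper's $\overline{1}$ (defective set of size \emph{at most} one, so possibly empty) the formula fails already for $n=7$, $k=3$, where eight possibilities cannot be resolved by three queries whose first test has at most three candidates on the yes-branch; so state the convention you are using, which is the one under which the displayed formula is true. Second, in the lower bound you should remark that a query may be replaced by its intersection with the current candidate set without changing any answer, which is what justifies restricting to $i\le k$ candidates on the yes-branch and makes the recursion an equality rather than just an upper bound on $M$.
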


The proof of Theorem~\ref{adaptive} can be easily generalized for larger defective sets.

\begin{thm}
For any integers $k, n>k, d>1$ it holds
\[\left \lceil \frac{n}{k} \right \rceil - 2 + \log \binom{k+1}{d} \leq t(n,\overline{d},k) \leq \left \lceil \frac{n}{k} \right\rceil -2 + d\left\lceil 1+\log k\right\rceil.\]
\end{thm}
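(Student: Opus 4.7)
The plan is to adapt Katona's argument for Theorem~\ref{adaptive} (the $d=1$ case), preserving its two-phase structure: a first batch of ``block'' queries that locates which parts of $[n]$ intersect $Y$, followed by binary-search-style queries within the identified regions.

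For the upper bound I partition $[n]$ into $m := \lceil n/k \rceil$ blocks $B_1,\ldots,B_m$ of size at most $k$ and query $B_1,\ldots,B_{m-2}$, using $m-2$ queries and revealing the set $P \subseteq \{1,\ldots,m-2\}$ of blocks that intersect $Y$. Since $|Y| \leq d$ we have $|P|\leq d$, and all defective elements lie in $R := \bigl(\bigcup_{i\in P}B_i\bigr)\cup B_{m-1}\cup B_m$. In the second phase the defectives are located one at a time by adaptive binary search inside $R$: a defective in a known-positive block of size $\leq k$ is identified in $\lceil \log k\rceil$ queries, while a defective in the combined region $B_{m-1}\cup B_m$ (of size $\leq 2k$, split into halves of size $\leq k$ to respect the query-size constraint) is identified in at most $\lceil \log(2k)\rceil = \lceil 1+\log k\rceil$ queries. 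Charging the ``is this unknown block positive?'' query to the first defective located inside it, and using $|Y|\leq d$ to terminate once $d$ defectives have been found, keeps the total second-phase cost at most $d\lceil 1+\log k\rceil$.

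For the lower bound I would use the adversary strategy that answers ``no'' to every query. After $q$ queries $A_1,\ldots,A_q$ each answered ``no'', every set $Y\subseteq V_q := [n]\setminus \bigcup_{i\leq q}A_i$ with $|Y|\leq d$ remains consistent with the answers. Since $|A_i|\leq k$ and $n>(m-1)k$ by the definition of $m=\lceil n/k\rceil$, after $q=m-2$ queries we have $|V_{m-2}| \geq n-(m-2)k \geq k+1$, so the number of consistent defective sets of size exactly $d$ is at least $\binom{|V_{m-2}|}{d}\geq \binom{k+1}{d}$. A standard information-theoretic doubling argument (each further query leaves at least half of the consistent possibilities with some adversary choice) then forces the algorithm to use at least $\lceil \log\binom{k+1}{d}\rceil$ additional queries, giving the lower bound $\lceil n/k\rceil - 2 + \log\binom{k+1}{d}$.

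The main obstacle is the sharp $d\lceil 1+\log k\rceil$ accounting in the upper bound's second phase: naively, locating each defective costs one query to check whether an unknown or residual region still contains a defective plus $\lceil \log k\rceil$ queries to pin-point it, which can add up to two extra queries per region if not handled carefully. The fix is to weave the verification queries into the binary-search tree, exploiting $|Y|\leq d$ to halt as soon as $d$ defectives have been located, so that the work spent verifying an empty region can be amortized against a defective that is never found and the overall budget is respected.
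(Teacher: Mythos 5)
Your proposal follows essentially the same two-phase strategy as the paper's own proof sketch: $\lceil n/k\rceil-2$ disjoint queries of size $k$ leaving a residual region of about $2k$ elements, binary searches with residue checks amortized at $\lceil 1+\log k\rceil$ queries per defective for the upper bound, and an all-negative adversary followed by the information-theoretic bound $\log\binom{k+1}{d}$ on the surviving $\geq k+1$ elements for the lower bound. The only difference — asking all block queries up front rather than interleaving them with the binary searches — is immaterial to the count, and you correctly identify the one delicate point (amortizing the verification queries against the budget of $d$ defectives) that the paper's sketch also glosses over.
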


\begin{proof}[Proof sketch.]
For the upper bound we simply ask disjoint query sets of size $k$. Whenever we get a positive answer we perform a standard binary search on the $k$ set to determine one of the defective elements. A second query is needed to determine if the $k-1$ remaining elements still contain a defective element. If so we can perform another binary search. If not we continue with another disjoint set of size $k$. When there are $2k$ elements remaining we can repeatedly perform a binary search to find the remaining defective elements.

For the lower bound suppose that we get negative answers for the first $\lceil \frac{n}{k} \rceil -2$ many queries. According to the information theory lower bound we need $\log \binom{k+1}{d}$ queries to find the at most $d$ defective elements among the remaining $k+1$ elements.
\end{proof}

\section*{Acknowledgments}
The authors would like to thank G.O.H. Katona for guidance and helpful discussions.  Additionally, the authors would like to acknowledge the hospitality of the organizers of the fifth Eml\'ekt\'abla workshop, during which the majority of this research was conducted.

\bibliographystyle{plain}

\bibliography{d-sepbib}

\begin{thebibliography}{10}

\bibitem{Ahlswede2008}
R.~Ahlswede.
\newblock Ratewise-optimal non-sequential search strategies under constraints
  on the tests.
\newblock {\em Discrete Appl. Math.}, 156(9):1431--1443, May 2008.

\bibitem{Berge1989}
C.~Berge.
\newblock Hypergraphs.
\newblock In {\em Combinatorics of {F}inite {S}ets}. North-Holland, Amsterdam,
  1989.

\bibitem{ChenHwang2007}
H.-B. Chen and F.~K. Hwang.
\newblock Exploring the missing link among {$d$}-separable, {$\overline
  d$}-separable and {$d$}-disjunct matrices.
\newblock {\em Discrete Appl. Math.}, 155(5):662--664, 2007.

\bibitem{CooperFriezeMolloyReed1996}
C.~Cooper, A.~Frieze, M.~Molloy, and B.~Reed.
\newblock Perfect matchings in random {$r$}-regular, {$s$}-uniform hypergraphs.
\newblock {\em Combin. Probab. Comput.}, 5(1):1--14, 1996.

\bibitem{Dorfmann1943}
R.~Dorfman.
\newblock The detection of defective members of large populations.
\newblock {\em Ann. Math. Statist.}, 14, 1943.

\bibitem{DuHwang2000}
D.-Z. Du and F.~K. Hwang.
\newblock {\em Combinatorial group testing and its applications}, volume~12 of
  {\em Series on Applied Mathematics}.
\newblock World Scientific Publishing Co., Inc., River Edge, NJ, second
  edition, 2000.

\bibitem{DyachkovRykov2002}
A.~G. D'yachkov and V.~V. Rykov.
\newblock Optimal superimposed codes and designs for {R}enyi's search model.
\newblock {\em J. Statist. Plann. Inference}, 100(2):281--302, 2002.
\newblock International Conference (of the Forum for Interdisciplinary
  Mathematics) on Combinatorics, Information Theory and Statistics (Portland,
  ME, 1997).

\bibitem{EllisLinial2014}
D.~Ellis and N.~Linial.
\newblock On regular hypergraphs of high girth.
\newblock {\em Electron. J. Combin.}, 21(1)(P1.54), 2014.

\bibitem{ErdosSachs1963}
P.~Erd{\H{o}}s and H.~Sachs.
\newblock Regul{\"{a}}re {G}raphen gegebener {T}aillenweite mit minimaler
  {K}notenzahl (in {G}erman).
\newblock {\em Wiss. Z. Martin-Luther--Univ. Halle--Wittenberg Math.-Natur.
  Reihe}, 12:251--257, 1963.

\bibitem{FurediRusziko2013}
Z.~F{\"u}redi and M.~Ruszink{\'o}.
\newblock Uniform hypergraphs containing no grids.
\newblock {\em Adv. Math.}, 240:302--324, 2013.

\bibitem{HosszuTapolcaiWiener2013}
\'{E}. Hosszu, J.~Tapolcai, and G.~Wiener.
\newblock On a problem of {R}\'{e}nyi and {K}atona.
\newblock In {\em Proc. 8th Jpn.-Hungar. Symp. Disc. Math. Appl.}, pages
  229--232, 2013.

\bibitem{Katona1966}
G.~Katona.
\newblock On separating systems of a finite set.
\newblock {\em J. Combin. Theory}, 1:174--194, 1966.

\bibitem{Katona1973}
G.~O.~H. Katona.
\newblock Combinatorial search problems.
\newblock In J.~N.~Srivastava et~al., editor, {\em A {S}urvey of
  {C}ombinatorial {T}heory}. {N}orth {H}olland {P}ublishing {C}ompany,
  Amsterdam, 1973.

\bibitem{Luzgin1980}
V.N. Luzgin.
\newblock Separating systems of partitions of a finite set.
\newblock {\em Kombinatorny\u\i\ Analiz.}, 5:39--45, 1980.

\bibitem{Renyi1961a}
A.~R{\'e}nyi.
\newblock On a problem of information theory.
\newblock {\em Publ. Math. Inst. Hungar. Acad. Sci.}, 6:505--516, 1961.

\bibitem{Sterrett1957}
A.~Sterett.
\newblock On the detection of defective members of large populations.
\newblock {\em Ann. Math. Statist.}, 28, 1957.

\bibitem{Wegener1979}
I.~Wegener.
\newblock On separating systems whose elements are sets of at most $k$
  elements.
\newblock {\em Discrete Mathematics}, 28(2):219--222, 1979.

\end{thebibliography}

\end{document}